\documentclass[journal]{IEEEtran}

%

\usepackage{amsmath}    
\usepackage{graphics,epsfig,subfigure}    
\usepackage{verbatim}   
\usepackage{color}      
\usepackage{subfigure}  
\usepackage{amssymb}
\usepackage{wasysym}
\usepackage{epstopdf}
\usepackage{graphicx}
\DeclareGraphicsRule{.jpg}{eps}{.bb}{}
\usepackage{mathrsfs}

\newcommand{\prob}[1]{\text{Pr}\big\{#1\big\}}
\newcommand{\expect}[1]{\mathbb{E}\big\{#1\big\}}

\newcommand{\bv}[1]{{\boldsymbol{#1} }}
\newcommand{\script}[1]{{{\cal{#1} }}}


\newtheorem{lemma}{\textbf{Lemma}}

\newtheorem{theorem}{\textbf{Theorem}}

\newtheorem{assumption}{\textbf{Assumption}}
\newtheorem{definition}{\textbf{Definition}}

\allowdisplaybreaks
\abovedisplayskip=.05in \belowdisplayskip=.05in

\begin{document}

\title{The Value-of-Information in Matching with Queues}
\author{ Longbo Huang\\
longbohuang@tsinghua.edu.cn\\
IIIS, Tsinghua University
\thanks{This paper will be presented in part at the $16$th ACM International Symposium on Mobile Ad Hoc Networking and Computing (MobiHoc),  Hangzhou, China, June 2015.}
}


\maketitle

\begin{abstract} 
We consider the problem of \emph{optimal matching with queues} in dynamic systems and investigate the value-of-information. In such systems, the operators match tasks and resources  stored in queues, with the objective of maximizing the system utility of the matching reward profile, minus the average matching cost. This problem appears in many practical systems and the main challenges are the no-underflow constraints, and the lack of matching-reward information and system dynamics statistics. We develop two online matching algorithms: Learning-aided Reward optimAl Matching ($\mathtt{LRAM}$) and Dual-$\mathtt{LRAM}$ ($\mathtt{DRAM}$) to effectively resolve both challenges. Both algorithms are equipped with a learning module for estimating the matching-reward information, while $\mathtt{DRAM}$ incorporates an additional module for learning the system dynamics. We show that both algorithms achieve an $O(\epsilon+\delta_r)$ close-to-optimal utility performance for any $\epsilon>0$, while $\mathtt{DRAM}$ achieves a faster convergence speed and a better delay compared to $\mathtt{LRAM}$, i.e., $O(\delta_{z}/\epsilon + \log(1/\epsilon)^2))$ delay and $O(\delta_z/\epsilon)$ convergence under $\mathtt{DRAM}$ compared to $O(1/\epsilon)$ delay and convergence under $\mathtt{LRAM}$ ($\delta_r$ and $\delta_z$ are maximum estimation errors for reward and system dynamics). Our results reveal that information of different system components can play very different roles in algorithm performance and provide a systematic way for designing joint learning-control algorithms for dynamic systems. 







\end{abstract}

\section{Introduction}\label{section:intro}
Matching is a fundamental problem that  appears in resource allocation in various systems across  different areas. For instance,  network switch scheduling \cite{mckeown96}, online advertising \cite{matchingnowbook}, crowdsourcing \cite{lbc-2010}, ride sharing \cite{uber}, cloud computing \cite{cloud-mwm12},   and inventory control \cite{neelyhuang_assembly}. Hence,  efficient  matching algorithms are of great importance to system control. 


In this paper, we study the problem of optimal matching with queues in a dynamic environment  with unknown matching reward statistics. Specifically, we consider a system consists of a set of \emph{task queues} and a set of \emph{resource queues}, which store different types of workload and different types of resources that come into the system according to some random processes. At every time, the system operator decides how to match the resources to the pending workload. Each matching incurs a  \emph{cost}  that depends on the resource allocated and random factors in the system, e.g., changing channel conditions in a downlink system,  time-varying prices in inventory control, or fluctuating payment requirements in crowdsourcing. On the other hand, the matching also generates a \emph{reward}, which is  random with an unknown distribution  determined by the amount of tasks resolved and the system condition. The objective is to design a matching strategy that carefully manages the resources and tasks, so as to achieve optimal system utility, which is a function of the achieved reward profile, subject to the constraint  that all  tasks are fulfilled timely. 
 
This is a general problem and models the aforementioned application scenarios. However, it is  very challenging to solve. First, the system utility is a function of the matching reward, which means that it is affected by when and how much resource is actually matched to the tasks and is only indirectly related to the traffic rates. This differs significantly from traditional flow utility optimization problems \cite{neelysuperfast}, \cite{jiang-csma10}, and requires both careful admission control to avoid instability and appropriate matching to achieve good utility. Second, since each matching action is  rewarded based on the actual amount of tasks   resolved, the matching scheme must ensure that there are nonzero tasks and nonzero resources in the queues, i.e., \emph{no-underflow}. This constraint is complex and is mostly tackled with dynamic programming, which can have high computational complexity. Third, the system is dynamic and the statistics of system conditions and  reward functions are unknown beforehand. This requires that the matching scheme can efficiently learn the sufficient statistics of the randomness  and adapt to the changing environment. 

In addition to resolving the above challenges, we also  take one step further and try to investigate the \emph{value-of-information} in such matching systems with queues, by explicitly considering the impact of information on algorithm performance. 
Existing works on stochastic system control either focus on systems with perfect a-prior  information, e.g., \cite{tan-downlink-09}, \cite{xiaojun-order-delay11}, or rely on stochastic approximation techniques that do not require such information, e.g., \cite{hou-delay-10}, \cite{neelynowbook}. While the proposed solutions are effective, they do not capture how information affects  algorithm design and performance, and do not provide  interfaces for integrating the fast-developing ``data science'' tools, e.g.,  data collecting methods and machine learning algorithms, \cite{jordan-data-report13}, \cite{pattern-recognition-bishop}, into system control. 

To provide a rigorous quantification of the value of information, we first introduce an abstract notion of a \emph{learning module}, which represents a general information learning algorithm and features a  learning accuracy level $\delta$ (maximum error), a learning time $T_{\delta}$, and the probability of learning accuracy guarantee $P_{\delta}$. We then design two online matching algorithms: Learning-aided Reward optimAl Matching ($\mathtt{LRAM}$) and Dual-$\mathtt{LRAM}$ ($\mathtt{DRAM}$). $\mathtt{LRAM}$ utilizes a single $(T_{\delta_r}, \delta_r, P_{\delta_r})$ learning module for estimating the reward statistics and achieves an $O(\epsilon+\delta_r)$ system utility, for any $\epsilon>0$, while ensuring an $O(1/\epsilon)$ delay bound and an $O(1/\epsilon)$ algorithm convergence time,  defined to be the time taken for the algorithm to enter the optimal control state. $\mathtt{DRAM}$ incorporates an additional $(T_{\delta_z}, \delta_z, P_{\delta_z})$ learning module for the random system state distribution and  guarantees a similar $O(\epsilon+\delta_r)$ system utility. Moreover, $\mathtt{DRAM}$ is able to achieve an $O(\delta_{z}/\epsilon + \log(1/\epsilon)^2)$ delay bound and an $O(\delta_z/\epsilon)$ algorithm convergence time, which can be significantly faster compared to $\mathtt{LRAM}$. 


Our results reveal an interesting fact that the reward  information largely determines the utility performance, while the system dynamics  information  greatly affects delay and  algorithm convergence. This indicates that \emph{information of different system components can have different impacts on algorithm  performance, and may require different learning power for achieving a desired goal}. 
Closest to our paper is the recent work \cite{huang-learning-sig-14}, which considers joint learning and control. Our framework allows much more general learning methods and resolves the no-underflow constraints. We also quantify the values of different system information. 

We summarize the main contributions as follows: 
\begin{enumerate}
 
\item We propose a matching queueing system model, which can model general  resource-task matching problems in stochastic systems. To explicitly quantify the value of information in such systems, we  introduce an abstract notion of a $(T_{\delta}, \delta, P_{\delta})$-\emph{learning module} that captures key characteristics of general  learning algorithms and provides interfaces for bringing the information learning aspect into system control.  
 
\item We design two learning-aided matching algorithms $\mathtt{LRAM}$ and $\mathtt{DRAM}$. We show that with a single $(T_{\delta_r}, \delta_r, P_{\delta_r})$-module for learning reward statistics, $\mathtt{LRAM}$ achieves an $O(\epsilon+\delta_r)$ utility, while ensuring an $O(1/\epsilon)$ delay bound and an $O(1/\epsilon)$ algorithm convergence time. $\mathtt{DRAM}$ adopts an additional $(T_{\delta_z}, \delta_z, P_{\delta_z})$-module for learning the system state distribution, and guarantees a similar $O(\epsilon+\delta_r)$ system utility, while achieving an $O(\delta_{z}/\epsilon + \log(1/\epsilon)^2)$ delay  and an $O(\delta_z/\epsilon)$ convergence time. We also construct two $(O(1/\epsilon^c), O(\epsilon^{c/2}), 1-O(\epsilon^{\log(1/\epsilon)}))$ online learning modules based on sampling ($c>0$). Combining them with $\mathtt{DRAM}$, one achieves a fast $O(1/\epsilon^{1-c/2}+1/\epsilon^c)$ convergence time with $c<1$ (existing algorithms require $\Theta(1/\epsilon)$).

\item Our algorithm design approach provides a low-complexity way to tackle multiple simultaneous no-underflow constraints in systems and jointly optimize utilities that are not defined on flow rates.  The development of $\mathtt{DRAM}$ also demonstrates how general  learning algorithms can be combined with queue-based control (stochastic approximation) to achieve superior delay performance and accelerate algorithm convergence speed. 

\end{enumerate}




The rest of the paper is organized as follows. We first list a few motivating examples in Section \ref{section:example}. We then present the matching system model in Section \ref{section:model}. The algorithm design approach and the two algorithms $\mathtt{LRAM}$ and $\mathtt{DRAM}$ are presented in Section \ref{section:algorithm}. Analysis is carried out in Section \ref{section:analysis} and simulation results are presented in Section \ref{section:simulation}. We then conclude the paper in Section \ref{section:conclusion}. 

\section{Motivating Examples}\label{section:example}
\textbf{Crowdsourcing:} In a crowdsourcing application, e.g., crowdsourcing query search \cite{lbc-2010} or ride-sharing \cite{uber}, tasks of different types (task) arrive at the server and are assigned to workers (resource). 
The workers then carry out the tasks. Depending on the workers' qualifications, the types of jobs, and the instantaneous system condition (state), e.g., whether a query requestor is in a hurry due to weather, the requestors receives certain reward, e.g., satisfaction, and the workers receive payments. The objective of the system  is to design a matching scheme, so as to maximize the system utility, which is a function of the achieved requestor reward profile. 

\textbf{Energy Harvesting Networks:} In an energy harvesting network, e.g., \cite{tap-energy-ton14}, \cite{chen-energy-ton14}, nodes are responsible for  transmitting data (task) and can harvest energy (resource) from the environment. At every time, each node decides how much energy to allocate for transmission and determines traffic scheduling. Depending on the time-varying channel condition (state), the amount of energy enables certain processing results. The objective is to design a joint energy management and scheduling algorithm, so as to maximize traffic utility and ensure that no energy outage happens. 

\textbf{Online Advertisement:} In an online advertising system, \cite{matchingnowbook},  \cite{online-ad-opt-2011},  advertisers deposit  money (task) into their accounts at the advertising platform. Queries (resource) for different keywords arrive in the system and the server decides which advertiser's ads to show, based on their relevance  to the keywords and the available budget of the advertisers.  
Depending on the chosen ad  and the user's condition (state), e.g., location or mood, a business transaction may take place. 
The goal of the system is to design an ad matching scheme, so as to maximize the system's utility, which is a function of the average income profile from advertisers.  


\textbf{Cloud Computing:} In a cloud computing platform, e.g., \cite{cloud-mwm12},  computing resources (resource), e.g., CPU, memory, are assigned to virtual machine instances (task) for processing arriving job requests. The quality of experience of a requestor depends on the job completion quality, which is affected by system conditions such as background task level (state) and the user status. 
The objective here is to design a resource allocation policy, such that the overall quality of service is maximized. 

In all these examples, the underlying problem is indeed matching with queues. Below, we present the general model. 


\section{System Model}\label{section:model}
We consider a discrete-time system shown in Fig. \ref{fig:system}. In this system, there are two sets of queues,  \emph{task queues} and  \emph{resource queues}, and a central server (called operator below), which coordinates resource allocation and scheduling in the system. Time is divided into unit-size slots, i.e., $t\in\{0, 1, ...\}$. 
\begin{figure}[cht]
\begin{center}
\includegraphics[height=1.5in, width=3.2in]{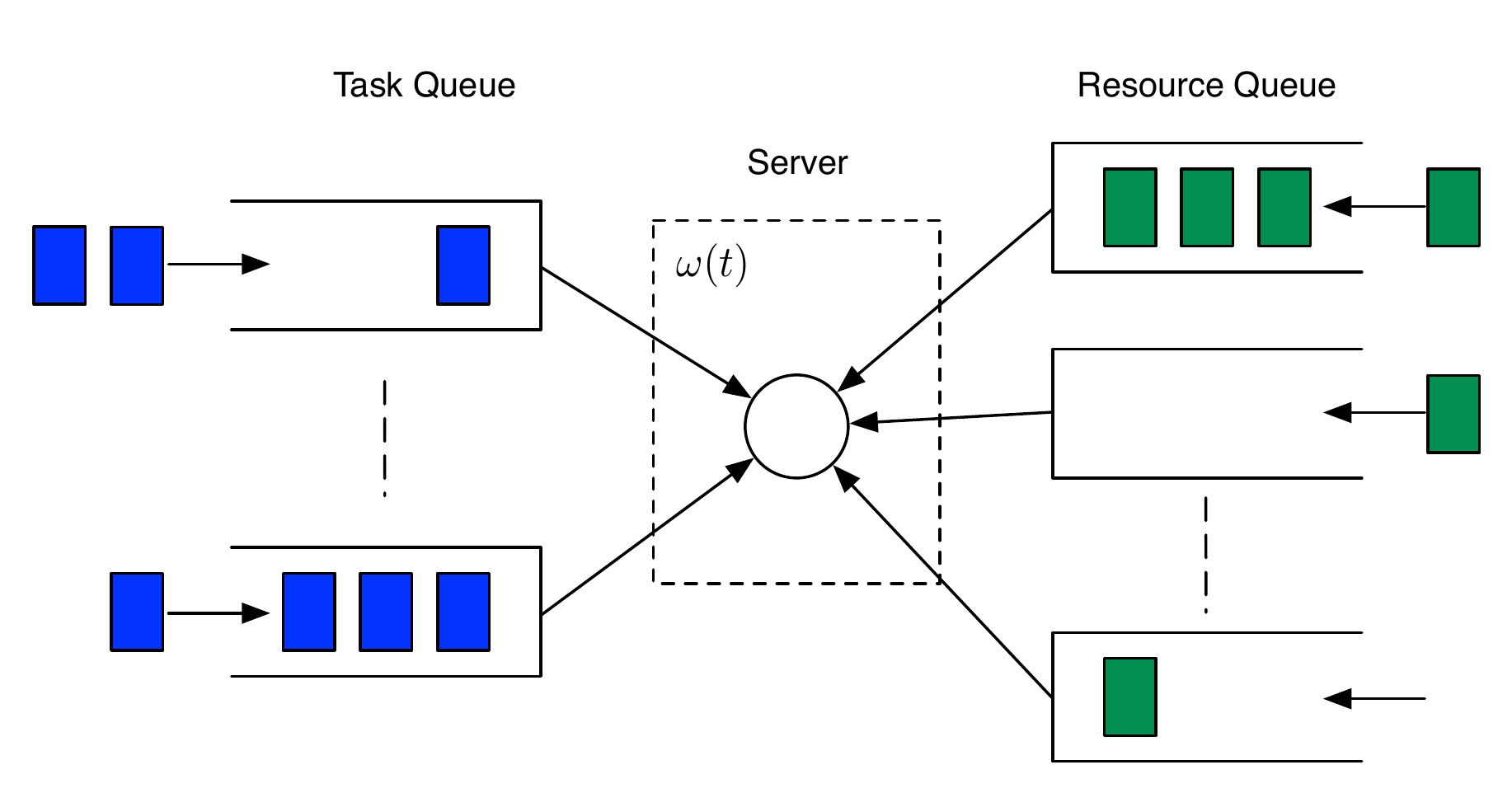}
\vspace{-.1in}
\end{center}
\vspace{-.1in}
\caption{The matching queueing system. }\label{fig:system}
\vspace{-.2in}
\end{figure}

\subsection{Tasks and Resources}
The task queues store jobs that come into the system and are waiting to be served by the server. We assume there are $N$ types of tasks and denote the set of task queues by $\script{Q}=\{\script{Q}_1, ..., \script{Q}_N\}$. We use $A_n(t)$ to denote the amount of new tasks arrivaling at $\script{Q}_n$ at time $t$ and assume that $0\leq A_n(t)\leq A_{\max}$.  
%
We then define the arrival vector  $\bv{A}(t) = (A_1(t), ..., A_N(t))$. 
%
In many systems, arrivals to the system may not always all be admitted due to congestion control, e.g., when all servers are busy. We model this by using $0\leq R_n(t)\leq A_n(t)$ to denote the actual admitted traffic to $\script{Q}_n$ at time $t$. 
We then use $Q_n(t)$ to denote the amount of tasks stored at $\script{Q}_n$ at time $t$ and denote $\bv{Q}(t)=(Q_1(t), ..., Q_N(t))$ the task queue vector. 

The resource queues, on the other hand,  hold the resources the system collects over time. There are $M$ types of system resources and we denote the resource queues by $\script{H}=\{\script{H}_1, ..., \script{H}_M\}$. We similarly let $e_m(t)$ be the amount of new resource arriving at $\script{H}_m$ with $0\leq e_m(t)\leq h_{\max}$. We also use $H_m(t)$ to denote the amount of resource $m$ the system current holds and denote $\bv{H}(t)=(H_1(t), ..., H_M(t))$ the resource queue vector. 


In many systems, it is feasible (and sometimes necessary) to control the amount of resources in the system, e.g.,  to avoid too many workers waiting in crowdsourcing. We model this decision by using $h_{m}(t) \in[0,  e_m(t)]$ to denote the actual amount of type $m$ resource admitted.  
For now, it is also convenient to temporarily assume that the queues are all of unlimited sizes. We will later show that our algorithms ensure that finite buffer sizes are sufficient.


\subsection{System State and Resource allocation}
We assume that the system has a time-varying condition, e.g., the channel conditions in a downlink system, or the expected happiness measures of human users in a crowdsourcing system. We call this condition the \emph{system state} and model it by a random state variable $\omega(t)$. Note that $\omega(t)$ represents the \emph{aggregate} system condition. 

Denote $\bv{z}(t)=(\bv{A}(t), \bv{e}(t), \omega(t))$. In this paper, we assume that $\bv{z}(t)$ is i.i.d. and takes values in $\script{Z}=\{\bv{z}_1, ..., \bv{z}_K\}$. We then denote $\pi_k=\prob{\bv{z}(t)=\bv{z}_k}$.  Note that this allows arbitrary dependency among $\bv{A}(t)$, $\bv{e}(t)$, and $\omega(t)$. 

At every time $t$,   the system operator determines the amount of resource to allocate to serving each queue. We denote this decision by a \emph{matching matrix} $\bv{b}(t)=(b_{mn}(t), m, n)$, 
where $b_{mn}(t)$ denotes the type $m$ resource allocated to queue $n$. When $\bv{z}(t)=\bv{z}_k$, $\bv{b}(t)$  takes values from a finite discrete set $\script{B}_{k}\subset\mathbb{R}^N_+$.\footnote{This assumption is made to simplify the learning algorithm description (Section \ref{section:imperfect}). Our results can likely be extended to the case when $\{\script{B}_{k}, k\}$ are general compact sets in $\mathbb{R}^N_+$. }
We define $b_{\max}\triangleq \max_{\bv{b}\in\script{B}_k, k}\|\bv{b}\|_{\infty}$ the maximum amount of resource allocated to any queue at any time. 
It is clear that at any time $t$, we must have: 
\begin{eqnarray}
\sum_n b_{mn}(t) \leq H_m(t), \quad\forall\, m. \label{eq:no-underflow}
\end{eqnarray}
This is because one cannot spent more resource than what is available. In the following, we call (\ref{eq:no-underflow}) the \emph{no-underflow} constraint. 
Depending on the system state and the  resource allocation decision, each task queue gets a service rate $\mu_n(t)\triangleq \mu_n(\bv{z}(t), \bv{b}(t))$. 
We assume $\mu_n(\bv{z}(t), \bv{b}(t))\in[0, \mu_{\max}]$ for all $\bv{z}(t)$ and $\bv{b}(t)$ and that $\{\mu_n(\bv{z}(t), \bv{b}(t))\}_{n\in\script{N}}$ are known to the operator. Also, they satisfy that $\mu_n(\bv{z}(t), \bv{0})=0$  for all $\bv{z}(t)$, and if $\mu_n(\bv{z}(t), \bv{b})>0$, then 
\begin{eqnarray}
\mu_n(\bv{z}(t), \bv{b})\geq \beta^l_{\mu}\min_{m:b_{mn}>0} b_{mn},\label{eq:mu-prop1}
\end{eqnarray}
for some $ \beta^l_{\mu}>0$. 
Moreover, if two vectors $\bv{b}$ and $\bv{b}'$ are such that $\bv{b}'$ is obtained by setting $b_{mn}$ in $\bv{b}$ to zero, then, 
\begin{eqnarray}
\hspace{-.3in}\mu_n(\bv{z}, \bv{b}) \leq \mu_n(\bv{z}, \bv{b}') +  \beta^u_{\mu} b_{mn},\,\forall\, n. \label{eq:mu-prop2}
\end{eqnarray}
Note that (\ref{eq:mu-prop1}) and (\ref{eq:mu-prop2}) are not  restrictive. They simply require that nonzero resource is needed for getting a positive service rate, and that a positive rate is upper and lower bounded by linear functions of the resources allocated.  



\vspace{-.1in}
\subsection{Matching Cost and Reward}
In every time slot, due to resource expenditure, there is a \emph{matching cost} associated with the resource allocation decision. 
We model this by  denoting  $c(t)=c(\bv{z}(t), \bv{b}(t))$ the cost for choosing the resource vector $\bv{b}(t)$. This cost can represent, e.g., cost for purchasing raw materials in inventory control, or payments to workers in a crowdsourcing application.  
One example is $c(\bv{z}(t), \bv{b}(t))=\sum_{nm}c_m(\bv{z}(t))b_{nm}(t)$, where $c_m(\bv{z}(t))$ denotes the per-unit resource price for type $m$ resource under state $\bv{z}(t)$. 
We assume that $c(\bv{z}(t), \bv{b}(t))\in[0, c_{\max}]$ for all time and it is known to the system operator. Also, if $\bv{0}\preceq\bv{b}_1\preceq\bv{b}_2$ (entrywise-less), when $c(\bv{z}(t), \bv{b}_1)\leq c(\bv{z}(t), \bv{b}_2)$. 

Every time a matching is completed, the operator collects a \emph{matching reward}, e.g., a customer conversion due to an ad, or user satisfaction due to job completion. 
%
%
We model this by denoting the reward collected at time $t$ from type $n$ tasks by   $\kappa_n(t)$. We assume that $\kappa_n(t)\in[0, r_{\max}]$ is an i.i.d. random variable given $\bv{z}(t)$ and $\bv{b}(t)$, and its mean is determined by the reward function $r_n(\bv{z}(t), \tilde{\mu}_n(t))\triangleq\expect{\kappa_n(t)\left.|\right. \bv{z}(t), \bv{b}(t), \bv{Q}(t)}$, where $\tilde{\mu}_n(t) = \min[Q_n(t), \mu_n(t)]$ denotes the \emph{actual} amount of tasks completed. We assume that $r_n(\bv{z}(t), \tilde{\mu}_n(t))$ satisfies: 
\begin{eqnarray}
r_n(\bv{z}(t), \mu)\leq r_n(\bv{z}(t), \mu'), \,\text{if}\,\, \mu\leq\mu', 
\end{eqnarray}
and denote $\bv{r}=\{ r_n(\bv{z}, \mu_n(\bv{z}, \bv{b}_z)), \bv{z}\in\script{Z}, \bv{b}_z\in\script{B}_{\bv{z}} \}$ the reward matrix. Since each $\script{B}_{\bv{z}}$ is finite, $\bv{r}$ is also finite.  


Different from existing works, e.g., \cite{online-ad-opt-2011}, \cite{huangneelypricing-ton}, we do not assume any prior knowledge of the functions $r(\bv{z}(t), \mu)$.\footnote{This is different from the $\bv{\mu}$ functions, which measure how much resources are spent and can  typically  be observed by the system controller.}
This is quite  common in practice. For example, in crowdsourcing applications, it is often unknown a-prior how qualified a worker is for a certain type of tasks;  or in  online advertising, one often does not know the conversion probabilities beforehand. 

\subsection{Queueing}
From the above, we see that the queue vectors $\bv{Q}(t)$ and $\bv{H}(t)$ evolve according to: 
\begin{eqnarray}
\hspace{-.2in}Q_n(t+1) &=& \max[Q_n(t) -\mu_n(t), 0] +R_n(t),\,\forall\,n, \label{eq:q-dyn} \\
\hspace{-.2in}H_m(t+1) &=& H_m(t)-\sum_{n}b_{mn}(t)+ h_m(t), \,\forall\, m. \label{eq:h-dyn}
\end{eqnarray}
Notice that there is no $\max[\cdot, \cdot]$ operator in (\ref{eq:h-dyn}). This is due to the no-underflow constraint (\ref{eq:no-underflow}).  
In our paper, we say that a queue vector process $\bv{x}(t)\in\mathbb{R}^d_+$ is stable if it satisfies: \footnote{In this paper, we assume that all limits exist with probability $1$. Our results can be extended to more general cases with $\liminf$ or $\limsup$ arguments. }
\begin{eqnarray}
x_{\text{av}} \triangleq \lim_{t\rightarrow\infty}\frac{1}{t}\sum_{\tau=0}^{t}\sum_{n=1}^d\expect{x_n(\tau)}<\infty. 
\end{eqnarray}

\subsection{Utility Optimization}
The system's utility is determined by a function of the average matching reward profile. Specifically,  define $\overline{r}_{n}\triangleq \lim_{t\rightarrow\infty}\frac{1}{t}\sum_{\tau=0}^{t-1}\expect{r_n(\tau)}$. The system utility is given by:  
\begin{eqnarray}
U_{\text{total}}(\overline{\bv{r}})\triangleq \sum_nU_n(\overline{r}_{n}). 
\end{eqnarray}
Here each $U_n(r)$ is an increasing concave function with $U_n(0)=0$ and $U'(0)<\infty$. We denote $\beta\triangleq\max_{n, r}(U_n(r))'$ the maximum first derivative of the utility functions.  
We also define the following system cost due to resource expenditure:  
\begin{eqnarray}
C_{\text{total}}\triangleq \lim_{t\rightarrow\infty}\frac{1}{t}\sum_{\tau=0}^{t-1}\expect{c(\tau)}.\label{eq:cost}
\end{eqnarray}
%
We say that a matching algorithm $\Pi$  is feasible if for all time $t$, it selects  $0\preceq\bv{R}(t)\preceq\bv{A}(t)$, $0\preceq\bv{h}(t)\preceq\bv{e}(t)$, and $\bv{b}(t)\in\script{B}_{\bv{z}(t)}$, and it ensures constraint (\ref{eq:no-underflow}) for all time. 
Our objective  is to design a feasible policy $\Pi$, so as to: 
\begin{eqnarray}
\max: && f_{\text{av}}\triangleq U_{\text{total}}(\overline{\bv{r}}) - C_{\text{total}}  \label{eq:opt-control-obj}\\
\text{s.t.} &&  Q_{\text{av}}<\infty,\, H_{\text{av}}<\infty. 
\end{eqnarray}
We denote the optimal solution value as $f_{\text{av}}^*$.  
Here  the queue stability constraints are to ensure that the tasks and resources do not stay in the queue forever. This is important in many cases. For instance, in an energy harvesting network, it is important to ensure timely packet delivery, or in a crowdsourcing system, it is desirable to keep the worker waiting time short. 


\subsection{Discussion on the Model}
Due to the general matching reward function, our problem is different from a flow utility maximization problem, e.g., \cite{neelysuperfast}, which is a special case when $r_n(t)=\tilde{\mu}_n(t)$. 
By tuning the parameters of the model, our model can model all the examples  in  Section \ref{section:example}. For example, by choosing $U_{\text{total}}=\sum_n\alpha_n\overline{r}_n$, the system models the revenue maximziation problem in online advertisement systems. By choosing $\mu_n(t) = 1_{[b_1(t)>b_1^{\min}]}1_{[b_2(t)>b_2^{\min}]}$, our model can represent a cloud computing system, where a computing task requires two types of resources. 

Problem (\ref{eq:opt-control-obj})  is very challenging. First of all, the no-underflow constraint  (\ref{eq:no-underflow}) requires a very careful selection of control actions, because actions in a slot can affect action feasibility in later slots. 
Problems of this kind are often tackled with dynamic programming, whose computational complexity can be extremely high when the action space is large. 
Secondly, the reward function $r_n(\bv{z}(t), \tilde{\mu}_n(t))$ is unknown and is dependent on $\bv{Q}_n(t)$. This makes the problem very different from existing utility maximization works, e.g., \cite{neelysuperfast}, \cite{jiang-csma10}. 
Thirdly, due to the more and more stringent user requirements on service quality, it is more desirable to ensure  small queueing delay.  



\section{Optimal Matching}\label{section:algorithm}
In this section, we present our matching algorithms. We will first present an ideal algorithm, which assumes full knowledge of  the reward functions $r_n(\bv{z}(t), \mu)$ and will serve as a basic building block. Even in this case, we will see that the problem is highly nontrivial due to the existence of the no-underflow constraint (\ref{eq:no-underflow}) and the dependency of $r_n(t)$ on $\bv{Q}(t)$. 

\subsection{With Full Reward Information} 
To start, we first introduce an auxiliary variable $\gamma_n(t)\in[0, r_{\max}]$ and  create for each $n$ a  \emph{deficit queue} $d_n(t)$ that evolves as follows: 
\begin{eqnarray}
d_n(t+1) = \max[d_n(t) - \kappa_n(t), 0] + \gamma_n(t),  \label{eq:d-dyn}
\end{eqnarray}
with $\bv{d}(0)=\bv{0}$. 
Note that the input into $d_n(t)$ is $\kappa_n(t)$ instead of $r_n(t)$. The deficit queue $d_n(t)$ measures how much the actual reward profile is currently lagging behind the target value (due to randomness). 

Then, we  denote $f(t) \triangleq \sum_nU_n(\gamma_n(t))-c(t)$ the instantaneous system utility minus cost and denote  $\bv{y}(t)\triangleq(\bv{Q}(t), \bv{H}(t), \bv{d}(t))$.  
We also define a Lyapunov function as follows: 
\begin{eqnarray}
L(t) = \frac{1}{2}\|\bv{Q}(t)-\bv{\theta}_1\|^2 + \frac{1}{2}\|\bv{H}(t)-\bv{\theta}_2\|^2+\frac{1}{2}\|\bv{d}(t)\|^2,
\end{eqnarray}
where $\|\cdot\|$ is the euclidean norm and $\bv{\theta}_1=\theta_1\cdot\bv{1}^N$ and $\bv{\theta}_2=\theta_2\cdot\bv{1}^M$ with $\bv{1}^k\in\mathbb{R}^k$ being the vector with all components being $1$, and  $\theta_1$ and $\theta_2$ are constants that will be specified later. 
%
We define the one-slot utility-based conditional Lyapunov drift $\Delta_V(t) \triangleq \expect{L(t+1)-L(t) - Vf(t)\left.|\right. \bv{y}(t)}$. Using the queueing dynamics (\ref{eq:q-dyn}), (\ref{eq:h-dyn}),  and (\ref{eq:d-dyn}), we obtain the following lemma, in which $V\geq1$ is a tunable parameter introduced for controlling the tradeoff between system utility and service delay (explained later). 
\begin{lemma}\label{lemma:drift}
Under any feasible policy, we have:
\begin{eqnarray}
\hspace{-.3in}&&\Delta_V(t)\leq G - V\sum_n\expect{U_n(\gamma_n(t)) -d_n(t)\gamma_n(t)\left.|\right. \bv{y}(t) } \label{eq:drift} \\ 
\hspace{-.3in}&&\qquad\qquad\quad + \sum_n (Q_n(t)-\theta_1)\expect{R_n(t)\left.|\right. \bv{y}(t)}  \nonumber\\
\hspace{-.3in}&&\qquad\qquad\quad+ \sum_m(H_m(t)-\theta_2)\expect{h_m(t)\left.|\right. \bv{y}(t)}  \nonumber\\
\hspace{-.3in}&&  \qquad\qquad \quad  +\,\expect{Vc(t) -  \sum_m(H_m(t)-\theta_2)\sum_{n}b_{mn}(t)  \nonumber \\
\hspace{-.3in}&&\qquad\qquad  -\sum_n(Q_n(t)-\theta_1) \mu_n(t)-  \sum_{n} d_n(t)r_n(t)\left.|\right. \bv{y}(t)}.  \nonumber
\end{eqnarray}
Here $G\triangleq N(A_{\max}^2 +\mu_{\max}^2+2r_{\max}^2)+Mh_{\max}^2+MN^2b_{\max}^2$ does not depend on $V$, and the expectations are taken over the randomness in the system as well as in the policy. $\Diamond$
\end{lemma}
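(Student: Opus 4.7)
The plan is to bound $\Delta_V(t)$ in the standard drift-plus-penalty fashion by deriving a per-queue quadratic drift inequality for each of $\bv{Q}$, $\bv{H}$, and $\bv{d}$, summing them, taking conditional expectation given $\bv{y}(t)$, and finally subtracting $V\expect{f(t)\,|\,\bv{y}(t)}$ to recover the stated form.

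First I would handle the task queues. Squaring (\ref{eq:q-dyn}) and using the elementary bound $(\max[a-b,0])^2\leq (a-b)^2$ (valid since $\mu_n(t)\geq 0$), one obtains
\begin{eqnarray*}
\tfrac{1}{2}\big((Q_n(t{+}1)-\theta_1)^2-(Q_n(t)-\theta_1)^2\big) \leq \tfrac{1}{2}(\mu_n(t)^2+R_n(t)^2)+(Q_n(t)-\theta_1)(R_n(t)-\mu_n(t)).
\end{eqnarray*}
Here the $\theta_1$-shift is absorbed cleanly because $\theta_1\geq 0$ makes $Q_n(t)-Q_n(t{+}1)\leq\mu_n(t)-R_n(t)$ (one checks both cases of the max separately). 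For the resource queues, the no-underflow constraint (\ref{eq:no-underflow}) removes the $\max[\cdot,0]$ operator in (\ref{eq:h-dyn}), so direct expansion yields an equality
\begin{eqnarray*}
\tfrac{1}{2}\big((H_m(t{+}1)-\theta_2)^2-(H_m(t)-\theta_2)^2\big) = \tfrac{1}{2}\big(h_m(t)-\textstyle\sum_n b_{mn}(t)\big)^2+(H_m(t)-\theta_2)\big(h_m(t)-\textstyle\sum_n b_{mn}(t)\big).
\end{eqnarray*}
Finally, the deficit queues (\ref{eq:d-dyn}) are handled by the same max-queue trick, giving $\tfrac12((d_n(t{+}1))^2-(d_n(t))^2)\leq\tfrac12(\kappa_n(t)^2+\gamma_n(t)^2)+d_n(t)(\gamma_n(t)-\kappa_n(t))$.

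Next I would sum these three per-queue bounds over $n$ and $m$, take conditional expectation given $\bv{y}(t)$, and subtract $V\expect{f(t)\,|\,\bv{y}(t)}=V\expect{\sum_nU_n(\gamma_n(t))-c(t)\,|\,\bv{y}(t)}$. Bounding the quadratic noise terms by their worst-case values ($R_n\leq A_{\max}$, $\mu_n\leq\mu_{\max}$, $\kappa_n,\gamma_n\leq r_{\max}$, $h_m\leq h_{\max}$, $\sum_n b_{mn}\leq Nb_{\max}$) and collecting them produces the constant $G=N(A_{\max}^2+\mu_{\max}^2+2r_{\max}^2)+Mh_{\max}^2+MN^2b_{\max}^2$, using the coarse bound $(a-b)^2\leq a^2+b^2$ whenever $ab\geq 0$. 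The remaining linear-in-queue pieces then regroup exactly into the $\sum_n(Q_n-\theta_1)\expect{R_n-\mu_n|\bv{y}}$, $\sum_m(H_m-\theta_2)\expect{h_m-\sum_n b_{mn}|\bv{y}}$, and $-\sum_n d_n\expect{\kappa_n|\bv{y}}$ terms.

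The last substantive step is to replace $\expect{\kappa_n(t)\,|\,\bv{y}(t)}$ by $\expect{r_n(t)\,|\,\bv{y}(t)}$. This follows from the tower property: conditional on $(\bv{z}(t),\bv{b}(t),\bv{Q}(t))$ the reward $\kappa_n(t)$ has mean $r_n(\bv{z}(t),\tilde{\mu}_n(t))$ by definition, and $\bv{Q}(t)$ is measurable with respect to $\bv{y}(t)$, so conditioning down to $\bv{y}(t)$ preserves the identity. Rearranging the terms then produces (\ref{eq:drift}) exactly as stated. I expect no deep obstacle here—the calculation is a routine Lyapunov-drift expansion—but the one subtlety to watch carefully is the $\theta_1,\theta_2$ shifts together with the $\max[\cdot,0]$ in the $\bv{Q}$ and $\bv{d}$ dynamics, and the fact that the $\bv{H}$ drift is an equality (no slack from a truncation), which is what makes the no-underflow constraint usable inside the drift inequality rather than giving a further loss term.
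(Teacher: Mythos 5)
Your proposal is correct and follows essentially the same route as the paper's Appendix A proof: per-queue quadratic expansions of the shifted dynamics for $\bv{Q}(t)$, $\bv{H}(t)$ (an equality, thanks to the no-underflow constraint), and $\bv{d}(t)$, summed and combined with worst-case bounds on the quadratic terms to form $G$, then conditioned on $\bv{y}(t)$ with the penalty $-Vf(t)$ added. Your final replacement of $\expect{\kappa_n(t)\left.|\right.\bv{y}(t)}$ by $\expect{r_n(t)\left.|\right.\bv{y}(t)}$ via the tower property is exactly the step the paper invokes through the conditional i.i.d. assumption on $\kappa_n(t)$, so no gap remains.
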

\begin{proof}
See Appendix A. 
\end{proof}

We now construct our algorithm by minimizing the right-hand-side (RHS) of the drift (\ref{eq:drift}). 

\underline{\textbf{\textsf{Reward optimAl Matching ($\mathtt{RAM}$):}}} At every time $t$, observe $\bv{z}(t)$ and  $\bv{y}(t)$. Do: 
\begin{enumerate}
\item \textbf{Quota:} For each $n$, choose $\gamma_n(t)$ by solving: 
\begin{eqnarray}
\hspace{-.3in}\max: \,\, VU_n(\gamma_n(t)) -d_n(t)\gamma_n(t), \,\,\text{s.t.}\,\, 0\leq\gamma_n(t)\leq r_{\max}. \label{eq:quota}
\end{eqnarray}
\item \textbf{Admission:} For each $n$, if $Q_n(t)<\theta_1$, let $R_n(t)=A_n(t)$; otherwise $R_n(t)=0$. Similarly, for each $m$,  if $H_m(t)<\theta_2$, let $h_m(t)=e_m(t)$; otherwise $h_m(t)=0$.
%
 
\item \textbf{Resource:} Choose the resource allocation vector $\bv{b}(t)$ by solving: 
\begin{eqnarray}
\hspace{-.5in}\min: && \Psi_{r}(\bv{b})\triangleq Vc(\bv{z}, \bv{b}) -  \sum_m(H_m(t)-\theta_2)\sum_{n}b_{mn} \label{eq:Psi}\\
\hspace{-.5in}&& \qquad \quad\,\,\,\, -\sum_n(Q_n(t)-\theta_1) \mu_n(\bv{z}(t), \bv{b}) \nonumber\\
\hspace{-.5in}&&\qquad \quad \,\,\,\, -  \sum_{n} d_n(t)r_n(\bv{z}(t), \mu_n(\bv{z}(t), \bv{b}))\nonumber\\
\hspace{-.5in}\text{s.t.} && \bv{b}\in\script{B}_{\bv{z}(t)}, \text{Constraint } (\ref{eq:no-underflow})
\end{eqnarray}

\item \textbf{Queueing:} Update $\bv{Q}(t)$, $\bv{H}(t)$, and $\bv{d}(t)$,  according to (\ref{eq:q-dyn}), (\ref{eq:h-dyn}), and (\ref{eq:d-dyn}), respectively.  $\Diamond$
\end{enumerate}

Note that in (\ref{eq:Psi}) we have used $r_n(\bv{z}(t), \mu_n(t))$ instead of $r_n(\bv{z}(t), \tilde{\mu}_n(t))$. We will see in our later analysis that our algorithm automatically guarantees $\tilde{\mu}_n(t) = \mu_n(t)$. This is very useful, for otherwise the algorithm performance will be very hard to analyze. We also emphasize here that the introduction of $\theta_1$ and $\theta_2$ are important. It can be seen in the admission step here that if  $\theta_1=\theta_2=0$, i.e., without $\theta_1$ and $\theta_2$,  no task or resource will be admitted at the first place and the algorithm will not even proceed!

\subsection{With Reward Information Learning}\label{section:imperfect}
Here we consider the case when one does not have full reward information and  provide an algorithm that can integrate general learning methods for estimating $\bv{r}$. 

To also investigate the impact of learning on algorithm design and performance, we first define learning capability. Specifically, for any general matrix $\bv{W}$ and a  learning algorithm $\Gamma$  that outputs an estimation $\hat{\bv{W}}$,  we denote its maximum estimation error by: 
\begin{eqnarray}
\delta_{w}\triangleq \|\hat{\bv{W}} -  \bv{W} \|_{\max}, 
\end{eqnarray}
where $\|\bv{x}\|_{\max}\triangleq \max_{ij}|x_{ij}|$. Then, the formal definition of a \emph{learning module} is as follows.
\begin{definition}
An algorithm $\Gamma$ is called a $(T_{\delta}, P_{\delta}, \delta)$-learning module, if (i) it completes learning in $T_{\delta}$ time,  (ii) it guarantees  $\prob{\delta_{w}<\delta}\geq P_{\delta}$, and (iii) for any $T\geq0$, $P_{\delta}$ does not decrease if the algorithm is run for $T_{\delta}+T$ time. $\Diamond$
\end{definition}

Here $T_{\delta}$ can be both random or deterministic depending on the termination rules. This definition is general and captures key features of learning algorithms. 
With this definition, having perfect knowledge at the beginning can be viewed as having an $(0, 1, 0)$-learning module. 

We now present an optimal matching algorithm for general systems that do not possess perfect knowledge of  $\bv{r}$ and need to rely on some learning algorithms for estimation.  In the algorithm, we use $\beta_{\hat{r}}$ to denote the maximum ``derivative'' of  the estimated $\hat{\bv{r}}$ with respect to any $b_{mn}$. Specifically,  we assume that  if $\bv{b}$ and $\bv{b}'$ are such that $\bv{b}'$ is obtained by setting one $b_{mn}$ in $\bv{b}$ to zero. Then, 
\begin{eqnarray}
\hspace{-.3in}\hat{r}_n(\bv{z}, \mu_n(\bv{z}, \bv{b})) &\leq& \hat{r}_n(\bv{z}, \mu_n(\bv{z}, \bv{b}')) + \beta_{\hat{r}} b_{mn}, \,\forall\, n. 
\end{eqnarray}
Since both $\script{Z}$ and $\{\script{B}_{\bv{z}}\}$ are finite, we see that  $\beta_{\hat{r}}$ exists and is $\Theta(1)$ (possibly depends on $\hat{\bv{r}}$). 

\underline{\textbf{\textsf{Learning-aided Reward OptimAl Matching ($\mathtt{LRAM}$):}}} 
\begin{enumerate}
\item (\textbf{Learning}) Apply any $(T_{\delta_r}, P_{\delta_r}, \delta_r)$-learning module $\Gamma_r$. Terminate at $t=T_{\delta_r}$ and output $\hat{\bv{r}}$. 
 
\item (\textbf{Matching}) Set $\bv{Q}(T_{\delta_r}+1)=0$, $\bv{H}(T_{\delta_r}+1)=0$, and $\bv{d}(T_{\delta_r}+1)=0$. Choose $\theta_1$ and $\theta_2$ according to: 
\begin{eqnarray}
\theta_1 &=&(h_{\max}+ (V\beta+r_{\max})\beta_{\hat{r}})/\beta^l_{\mu} +\mu_{\max} \label{eq:theta-value1}\\
\theta_2 &=&(V\beta+r_{\max})\beta_{\hat{r}} + r_{\max}\beta^u_{\mu} +Nb_{\max}. \label{eq:theta-value2}
\end{eqnarray}
Run $\mathtt{RAM}$ with $\hat{\bv{r}}$. $\Diamond$ 
\end{enumerate}
In $\mathtt{LRAM}$, we explicitly separate the algorithm into two disjoint phases. This is chosen to facilitate presentation and analysis. Doing so also does not change the order of the overall algorithm convergence time and performance. %
We can also transform $\mathtt{LRAM}$ and $\mathtt{DRAM}$ below into continuous-learning versions, e.g.,  \cite{huang-learning-sig-14}, and  update estimations from time to time, e.g., using sliding-window estimation or frame-based estimation. 
It is also worth noting that $\theta_1$ and $\theta_2$ can be computed beforehand easily. This is a  feature useful for implementation.


\subsection{With System State Information Learning}\label{section:imperfect2}
In the previous section, we describe how the estimated reward information $\hat{\bv{r}}$ can be naturally integrated into a matching algorithm.  
Here we consider the case when a learning module is also applied to learning the statistics of the system state $\bv{z}(t)$. 
Our result here generalizes the dual-learning approach proposed in \cite{huang-learning-sig-14} to handle underflow and to allow general learning methods. 

To start, we  define the following optimization problem:  
\begin{eqnarray}
\hspace{-.3in}\max: && \Phi\triangleq V[\sum_nU_n(\gamma_n) - Cost] \label{eq:obj}\\
\text{s.t.} && \gamma_n\leq r_n\triangleq \sum_k\pi_kr_n(\bv{z}_k, \mu_n(z_k, \bv{b}^{k}))\label{eq:rn}\\
\hspace{-.3in}&& Cost\triangleq \sum_k\pi_kc(\bv{z}_k, \bv{b}^{k})\label{eq:cost}\\
\hspace{-.3in}&& \sum_k\pi_kR^{k}_n =  \sum_k\pi_k\mu_n(\bv{z}_k, \bv{b}^{k}),\,\forall\,n\label{eq:rate}\\
\hspace{-.3in}&& \sum_k\pi_k\sum_{n}b_{mn}^{k} =  \sum_k\pi_kh^{k}_m,\,\forall\,m \label{eq:resource}\\
\hspace{-.3in}&& 0\leq \gamma_n\leq r_{\max}, \bv{b}^{k}\in\script{B}_k \\
\hspace{-.3in}&& 0\preceq\bv{R}^k\preceq\bv{A}^k, 0\preceq\bv{h}^k\preceq\bv{e}^k. \label{eq:set}
\end{eqnarray}

Problem (\ref{eq:obj}) can intuitively be viewed as a way to solve our matching problem. The equalities in (\ref{eq:rate}) and (\ref{eq:resource}) are due to fact that only tasks that are actually served generate reward and the no-underflow constraint (\ref{eq:no-underflow}). However, a scheme obtained by solving (\ref{eq:obj}) may not be implementable due to  (i) it ignores the no-underflow constraint, and (ii) it assumes that all resources allocated are fully utilized, i.e., using $\mu_n(z_k, \bv{b}^{k})$ in (\ref{eq:rn}). We will also see later that, it requires a much larger learning time for such a scheme to have the right statistics for achieving a performance comparable to ours. 
%

We now obtain the  dual problem for (\ref{eq:obj}) as follows. 
\begin{eqnarray}
\min: \,\,\, g(\bv{\alpha}^{d}, \bv{\alpha}^{q}, \bv{\alpha}^{h})\,\,\text{s.t.}\,\, \bv{\alpha}^{d}\succeq\bv{0}, \bv{\alpha}^{q}\in\mathbb{R}^N, \bv{\alpha}^{h}\in\mathbb{R}^M, \label{eq:dual-fun}
\end{eqnarray}
where $g(\bv{\alpha}^{d}, \bv{\alpha}^{q}, \bv{\alpha}^{h})=\sum_k\pi_kg_k(\bv{\alpha}^{d}, \bv{\alpha}^{q}, \bv{\alpha}^{h})$ is the dual function, and  $g_k(\bv{\alpha}^{d}, \bv{\alpha}^{q}, \bv{\alpha}^{h})$ is defined as: 
\begin{eqnarray}
\hspace{-.3in}&&g_k(\bv{\alpha}^{d}, \bv{\alpha}^{q}, \bv{\alpha}^{h}) = \sup_{\bv{R}, \bv{\gamma}, \bv{b}, \bv{h}}\bigg\{V[\sum_nU_n(\gamma_n) - Cost] \label{eq:dual-zk} \\
\hspace{-.3in}&&\qquad\qquad\qquad - \sum_n\alpha^d_n[r_n(\bv{z}_k, \mu_n(z_k, \bv{b})) - \gamma_n]  \nonumber\\
\hspace{-.3in}&&\qquad- \sum_n\alpha^q_n[ R_n -  \mu_n(\bv{z}_k, \bv{b})]  - \sum_m\alpha^h_m[ \sum_{n}b_{mn} -  h_n] \bigg\}. \nonumber
\end{eqnarray}
Note that $g_k(\bv{\alpha}^{d}, \bv{\alpha}^{q}, \bv{\alpha}^{h})$ is indeed the dual function for state $\bv{z}_k$. 
With (\ref{eq:dual-fun}), we now present our algorithm, which  integrates system state information learning into control. 

\underline{\textbf{\textsf{Dual learning-aided Reward optimAl Matching ($\mathtt{DRAM}$):}}} 
\begin{enumerate}
 
\item (\textbf{Learning}) Apply any $(T_{\delta_{r}}, P_{\delta_{r}}, \delta_{r})$-learning module $\Gamma_{r}$ for $\bv{r}$ and any $(T_{\delta_{z}}, P_{\delta_{z}}, \delta_{z})$-learning module $\Gamma_{z}$ for $\bv{z}$. Terminate at $T_{L}=\max(T_{\delta_r}, T_{\delta_z})$ and output $\hat{\bv{r}}$ and $\hat{\bv{\pi}}$. Choose $\theta_1$ and $\theta_2$ according to: 
\begin{eqnarray}
\theta_1 &=& (h_{\max}+ (V\beta+r_{\max})\beta_{\hat{r}})/\beta^l_{\mu} +\mu_{\max} \\
\theta_2 &=& (V\beta+r_{\max})\beta_{\hat{r}} + r_{\max}\beta^u_{\mu} +Nb_{\max}. 
\end{eqnarray}
\item (\textbf{Dual learning}) Solve the \emph{empirical dual problem}: 
\begin{eqnarray}
\hspace{-.3in}\min: && \sum_k\hat{\pi}_k\hat{g}_k(\bv{\alpha}^{d}, \bv{\alpha}^{q}-\bv{\theta}_1, \bv{\alpha}^{h}-\bv{\theta}_2) \label{eq:dual-learning}\\
\hspace{-.3in}\text{s.t.} && \bv{\alpha}^{d}\succeq\bv{0}, \bv{\alpha}^{q}\in\mathbb{R}^N, \bv{\alpha}^{h}\in\mathbb{R}^M. \nonumber 
\end{eqnarray}
Here $\hat{g}_k(\bv{\alpha}^{d}, \bv{\alpha}^{q}-\bv{\theta}_1, \bv{\alpha}^{h}-\bv{\theta}_2)$ is defined in (\ref{eq:dual-zk}) with $\hat{\bv{r}}$. 
Denote the optimal solution as $\hat{\bv{\alpha}}^*=(\hat{\bv{\alpha}}^{d*}, \hat{\bv{\alpha}}^{q*}, \hat{\bv{\alpha}}^{h*})$. 
\item (\textbf{Matching}) Set  $\bv{Q}(T_{L}+1)=0$, $\bv{H}(T_{L}+1)=0$, and $\bv{d}(T_{L}+1)=0$. For all $t\geq T_L+1$, define: 
\begin{eqnarray}
\hat{\bv{Q}}(t)&=&\bv{Q}(t)+\hat{\bv{\alpha}}^{q*} - \bv{\zeta}_N\\
\hat{\bv{H}}(t)&=&\bv{H}(t)  + \hat{\bv{\alpha}}^{h*} -  \bv{\zeta}_M   \\
\hat{\bv{d}}(t)&=&\bv{d}(t)+\hat{\bv{\alpha}}^{d*} - \bv{\zeta}_N. 
\end{eqnarray}
where $ \bv{\zeta}_k$ is a vector in $\mathbb{R}^k$ with all elements being  $\zeta \triangleq 2\max(\delta_{z} V\log(V)^2, \log(V)^2)$. Run $\mathtt{RAM}$ with $\hat{\bv{r}}$, $\hat{\bv{Q}}(t)$, $\hat{\bv{H}}(t)$, and $\hat{\bv{d}}(t)$. If the resulting $\bv{b}(t)$ from (\ref{eq:Psi}) violates (\ref{eq:no-underflow})  for some $m$, change $\{b_{mn}(t)\}_{n\in\script{N}}$ to $\{\tilde{b}_{mn}(t)\}_{n\in\script{N}}$ with $\sum_n\tilde{b}_{mn}(t)=h_m(t)$ and drop $\mu_n(\bv{z}(t), \bv{b}(t))$ tasks from each $n$ that has $b_{mn}(t)>0$. $\Diamond$ \footnote{In actual implementation, one can still serve the tasks with the actual allocated resource. }
%
\end{enumerate}

$\mathtt{DRAM}$  first utilizes  learning to  obtain an empirical distribution, which is usually  crude but fast  at the beginning. Then, it transforms to queue-based control (or more generally, stochastic approximation), by obtaining an empirical optimal multiplier via dual learning. It then starts from the empirical multiplier and rely on queue-based control to learn the true optimal operation point. The procedure is shown in Fig. \ref{fig:explain}. This combination avoids the slow convergence regime of statistical learning and the slow start of stochastic approximation, and algorithms so developed can achieve much faster convergence and superior delay.   
\begin{figure}[cht]
\centering
\includegraphics[height=1.5in, width=2.8in]{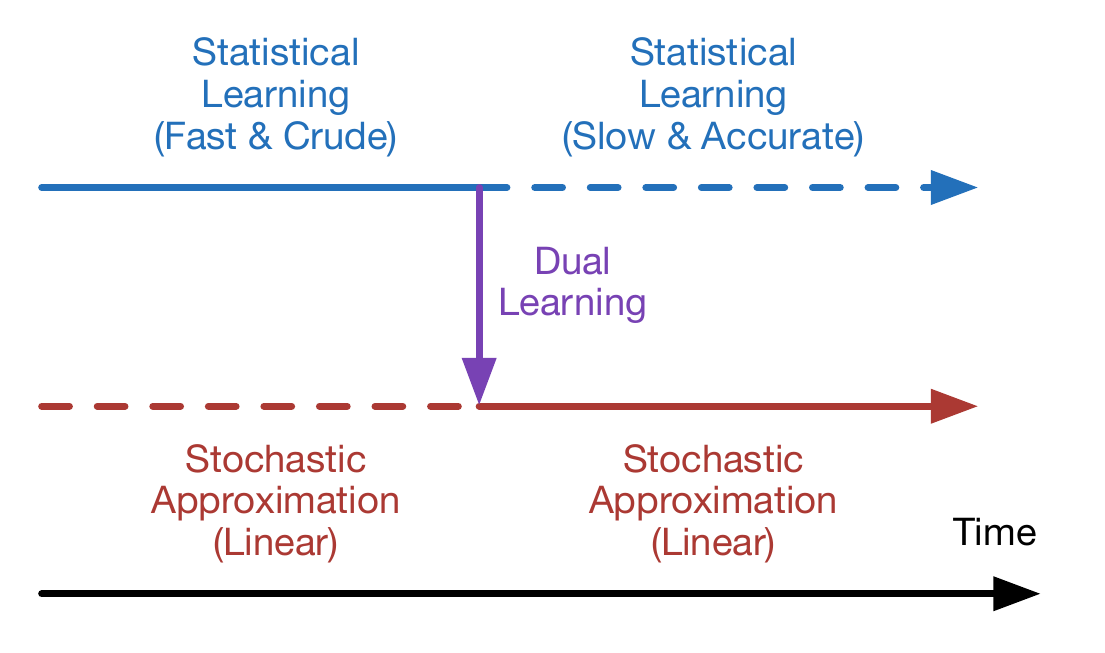}
\vspace{-.15in}
\caption{$\mathtt{DRAM}$ combines the fast regime of statistical learning and the smoothness of  queue-based control (more generally, stochastic approximation).  }\label{fig:explain}
\vspace{-.15in}
\end{figure}


Also note here that the dropping step is introduced to ensure zero underflow, by giving up the service rates and reward at that particular slot. We will see in later analysis and simulation that such an event almost never happens and hence does not affect performance. 








\subsection{Sampling-based Learning Module}
Here we describe two sampling-based learning modules for estimating $\bv{r}$ and $\bv{\pi}$. We first describe a threshold-based sampling module for estimating $\bv{r}$. In the module, we use $s(\bv{z}, \bv{b}, t)$ to denote the number of times the pair $(\bv{z}, \bv{b})$ is sampled, i.e., adopt $\bv{b}$ when $\bv{z}(t)=\bv{z}$,  up to time $t$. We also denote $s_{\min}(t)=\min_{(\bv{z}, \bv{b})} s(\bv{z}, \bv{b}, t)$. 

\underline{\textsf{Threshold-Based Sampling $\mathtt{TBS}(s_{th})$}}: Every time $t$,  sample the resource allocation vector $\bv{b}^* \in \arg\min_{\bv{b}} s(\bv{z}(t), \bv{b}, t)$ until  $s_{\min}(t)\geq s_{th}$. If terminate at $T_{tbs}$, output $\hat{r}_n(\bv{z}, \bv{b})=\sum_{t=1}^{T_{tbs}}1_{[\bv{z}(t)=\bv{z}, \bv{b}(t)=\bv{b}]} \kappa_n(t)/s(\bv{z}, \bv{b}, T_{tbs})$. 
$\Diamond$

Here $1_{[x]}$  is the indication function of $x$. 
The learning module $\mathtt{TBS}(s_{th})$ is very intuitive. It tries to balance the sampling frequencies of all $(\bv{z}, \bv{b})$ until every pair is sampled at least $s_{th}$ times. In this case,  the learning algorithm running time $T_{tbs}$ is random. In the following, we look at a deterministic time sampler for estimating $\bv{\pi}$. This module sets a sampling time threshold $T_{th}$. 

\underline{\textsf{Time-Limited Sampling $\mathtt{TLS}(T_{th})$}}: Observe $\bv{z}(t)$ for $T_{th}$ slots. Output $\hat{\pi}_k=\sum_{t=1}^{T_{th}}1_{[\bv{z}(t)=\bv{z}_k]}/T_{th}$ for all $k$. 
$\Diamond$

The following lemma shows the performance of the two modules. 
\begin{lemma}\label{lemma:module}
The two learning modules satisfy: 
\begin{enumerate}
\item[(a)]  $\mathtt{TBS}(s_{th})$: $\expect{T_{tbs}} = \Theta(s_{th})$ and with probability $1-O( e^{   - \frac{ s_{th} \log(s_{th})^2  }{  2(s_{th}r_{\max}^2 + r_{\max}\sqrt{s_{th}}/3)} } )$, $\delta_r =  \frac{\log(s_{th})}{\sqrt{s_{th}}}$. 
\item[(b)] $\mathtt{TLS}(T_{th})$: With probability $1- O(e^{-\frac{T^{th} \log(T_{th})^2}{ 2(T_{th}+\sqrt{T_{th}}/3) }})$, $\delta_z= \frac{\log(T_{th})}{\sqrt{T_{th}}}$. $\Diamond$ 
\end{enumerate}
\end{lemma}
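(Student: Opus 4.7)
Both claims are standard concentration statements, so the plan is to reduce each to Bernstein's inequality applied to empirical means of bounded i.i.d.\ random variables, together with a union bound over the (finitely many) triples being estimated. For part~(a), I additionally need a short balancing argument to control the expected stopping time $T_{tbs}$.

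\emph{Part (b), the direct case.} Since $\bv{z}(t)$ is i.i.d., each $\hat{\pi}_k = \frac{1}{T_{th}}\sum_{t=1}^{T_{th}} 1_{[\bv{z}(t)=\bv{z}_k]}$ is an empirical mean of $T_{th}$ i.i.d.\ Bernoulli variables with mean $\pi_k$, support in $[0,1]$, and variance $\pi_k(1-\pi_k)\le 1$. Bernstein's inequality with deviation $\delta=\log(T_{th})/\sqrt{T_{th}}$ gives a failure probability of order $\exp\!\bigl(-\tfrac{T_{th}\delta^{2}/2}{1+\delta/3}\bigr)$, which after multiplying numerator and denominator by $T_{th}$ matches the stated bound. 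A union bound over the $K$ states is absorbed into the $O(\cdot)$.

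\emph{Part (a), expected running time.} Let $N_k(T)=\sum_{t=1}^{T} 1_{[\bv{z}(t)=\bv{z}_k]}$. Because $\mathtt{TBS}$ always samples the currently least-visited action in the realized state, a pigeonhole/induction argument gives $\min_{\bv{b}\in\script{B}_k} s(\bv{z}_k,\bv{b},T)\ge \lfloor N_k(T)/|\script{B}_k|\rfloor$, so termination occurs as soon as $N_k(T)\ge s_{th}|\script{B}_k|$ for every $k$. Since $\expect{N_k(T)}=\pi_k T$ and $N_k(T)$ concentrates, Wald-type reasoning gives $\expect{T_{tbs}}\le C\max_k(s_{th}|\script{B}_k|/\pi_k)=\Theta(s_{th})$, and the trivial lower bound $T_{tbs}\ge s_{th}$ yields the matching $\Omega(s_{th})$ side.

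\emph{Part (a), accuracy.} Fix a triple $(n,\bv{z},\bv{b})$. The samples $\{\kappa_{n,i}\}$ used to form $\hat{r}_n(\bv{z},\bv{b})$ are, conditional on their count $s=s(\bv{z},\bv{b},T_{tbs})\ge s_{th}$, i.i.d.\ with mean $r_n(\bv{z},\bv{b})$ and support in $[0,r_{\max}]$. Bernstein's inequality (applied conditionally on $s$) gives
\begin{equation*}
\prob{\,|\hat r_n(\bv{z},\bv{b})-r_n(\bv{z},\bv{b})|>\epsilon \,\bigl|\, s\,}\le 2\exp\!\Bigl(-\tfrac{s\epsilon^{2}/2}{r_{\max}^{2}+r_{\max}\epsilon/3}\Bigr).
\end{equation*}
The exponent is decreasing in $s$, so replacing $s$ by $s_{th}$ only weakens the bound; substituting $\epsilon=\log(s_{th})/\sqrt{s_{th}}$ and multiplying numerator and denominator inside the exponent by $s_{th}$ reproduces the stated expression. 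A union bound over the finitely many $(n,\bv{z},\bv{b})$ triples, together with the definition $\delta_r=\|\hat{\bv{r}}-\bv{r}\|_{\max}$, concludes the argument.

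\emph{Main obstacle.} The only delicate point is in part~(a): the per-pair sample count $s(\bv{z},\bv{b},T_{tbs})$ is a data-dependent stopping time, so I have to ensure I am not double-using randomness. The plan handles this by observing that $\mathtt{TBS}$ chooses its next action based only on the history of $(\bv{z}(t),\bv{b}(t))$ — never on the reward realizations $\{\kappa_{n,i}\}$ — so conditional on $s$ the reward samples remain i.i.d.\ with the correct mean and bounded support, and Bernstein applies cleanly; the monotonicity of the Bernstein exponent in $s$ then lets me replace $s$ with its deterministic lower bound $s_{th}$ without further work.
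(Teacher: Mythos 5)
Your proposal follows essentially the same route as the paper's proof: a Bernstein-type concentration bound applied to the $s\geq s_{th}$ samples of each pair (the paper invokes a Chung--Lu concentration theorem, which is exactly this), a union bound over the finitely many $(n,\bv{z},\bv{b})$ triples, the pigeonhole observation that visiting state $\bv{z}_k$ about $|\script{B}_k|s_{th}$ times forces every action in $\script{B}_k$ to be sampled $s_{th}$ times (the paper sums the visit times over $k$ to get $\expect{T_{tbs}}\leq s_{th}\sum_k|\script{B}_k|/\pi_k$, rather than your max-plus-Wald phrasing), and the same argument for $\mathtt{TLS}$. Your explicit handling of the data-dependent sample count (conditioning on $s$ and using monotonicity of the exponent to replace $s$ by $s_{th}$) is a slightly more careful rendering of a step the paper passes over implicitly, but it is the same argument, so the proposal is correct.
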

\begin{proof}
See Appendix B. 
\end{proof}
By choosing $s_{th}=T_{th}=V^c$, we can guarantee $\delta_r=\delta_z=c\log(V)V^{-c/2}$ with  $P_{\delta_r}$ and $P_{\delta_z}$ being $1-O(V^{-\log(V)})$. 



\section{Performance Analysis}\label{section:analysis}

In this section, we present the performance results of $\mathtt{LRAM}$ and $\mathtt{DRAM}$. We start with some definitions and assumptions. For notation simplicity, we  denote $\bv{\alpha}=(\bv{\alpha}^{d}, \bv{\alpha}^{q}, \bv{\alpha}^{h})$ and write $\bv{\alpha}-\bv{\theta}=(\bv{\alpha}^{d}, \bv{\alpha}^{q}-\bv{\theta}_1, \bv{\alpha}^{h}-\bv{\theta}_2)$. Also, to indicate the different distributions and reward functions used, we use $\hat{g}(\bv{\alpha})$ to denote the dual function when $\bv{r}$ is replaced by $\hat{\bv{r}}$ in (\ref{eq:dual-zk}) and the distribution is given by $\bv{\pi}$. Then, we use $g^{\hat{\pi}}(\bv{\alpha})$ and $\hat{g}^{\hat{\pi}}(\bv{\alpha})$ to denote the dual function with distribution $\hat{\bv{\pi}}$ and $\bv{r}$, and with $\hat{\bv{\pi}}$ and $\hat{\bv{r}}$, respectively.


\subsection{Preliminaries}
We define the following polyhedral system structure: 
\begin{definition}
A system is polyhedral with parameter $\rho>0$ if the dual function $g(\bv{\alpha})$ satisfies:
\begin{eqnarray}
g(\bv{\alpha}^*) \leq g(\bv{\alpha})-\rho\|\bv{\alpha}^* -  \bv{\alpha}\|. \label{eq:polyhedral}
\end{eqnarray}
Here $\bv{\alpha}^*$ is an optimal solution of (\ref{eq:dual-fun}). $\Diamond$ 
\end{definition}
The polyhedral structure typically appears in practical systems, especially when the system action sets are discrete (see  \cite{huangneely_dr_tac} for more discussions). Note that (\ref{eq:polyhedral}) holds for all $V$ if it holds under any $V$, in particular $V=1$.

In our analysis, we make the following assumptions. 
\begin{assumption}\label{assumption:bdd-LM}
There exist constants $\epsilon_{r}, \epsilon_z=\Theta(1)>0$ such that for any valid state distribution $\hat{\bv{\pi}}$ and reward statistics $\hat{\bv{r}}$ with $\|\hat{\bv{\pi}} - \bv{\pi} \|\leq \epsilon_z$ and $|| \bv{r} - \hat{\bv{r}} ||\leq \epsilon_r$, there exist a set of actions $\{\bv{\gamma}^{k}\}_{k=1,..., |\script{Z}|}$, $\{\bv{R}^{k}_i\}_{k=1,..., |\script{Z}|}^{i=1,2, ..., \infty}$,  $\{\bv{b}^{k}_i\}_{k=1,..., |\script{Z}|}^{i=1,2, ..., \infty}$, and  $\{\bv{h}^{k}_i\}_{k=1,..., |\script{Z}|}^{i=1,2, ..., \infty}$,  and variables $\lambda^{k}_{i}\geq0$ with $\sum_i\lambda^{k}_{i}=1$ for all $k$  (possibly depending on $\hat{\bv{\pi}}$ and $\hat{\bv{r}}$), such that:
\begin{eqnarray}
\gamma_n -  \sum_k\hat{\pi}_k\sum_{i}\lambda^k_i\hat{r}_n(\bv{z}_k, \mu_n(z_k, \bv{b}_i^{k}))\leq - \eta_0, \label{eq:rate-service-a}
\end{eqnarray}
where $\eta_0=\Theta(1)>0$ is independent of $\hat{\bv{\pi}}$ and $\hat{\bv{r}}$, and that 
\begin{eqnarray}
\hspace{-.3in}&&  \sum_k\hat{\pi}_k\sum_i\lambda^k_i R^{k}_{in} =  \sum_k\hat{\pi}_k\sum_i\lambda^k_i\mu_n(\bv{z}_k, \bv{b}^{k}_i) \label{eq:rate-a}\\
\hspace{-.3in}&& \sum_k\hat{\pi}_k\sum_i\lambda^k_i \sum_{n}b_{imn}^{k} =  \sum_k\hat{\pi}_k\sum_i\lambda^k_i h^{k}_{im} \label{eq:resource-a}
\end{eqnarray}
where $0<\sum_k\hat{\pi}_k\sum_i\lambda^k_i R^{k}_{in}<\mathbb{E}_{\hat{\bv{\pi}}}\{A_n(t)\}$ as well as $0< \sum_k\hat{\pi}_k\sum_i\lambda^k_i h^{k}_{im} <\mathbb{E}_{\hat{\bv{\pi}}}\{e_m(t)\}$. $\Diamond$
\end{assumption} 
\begin{assumption}\label{assumption:share-poly}
For any $\hat{\bv{\pi}}$ and $\hat{\bv{r}}$ with $\|\hat{\bv{\pi}} - \bv{\pi} \|\leq \epsilon_z$ and $||  \hat{\bv{r}} - \bv{r}  ||\leq \epsilon_{r}$, if $g(\bv{\alpha})$ is polyhedral with parameter $\rho$, then $\hat{g}^{\hat{\pi}}(\bv{\alpha})$  is also polyhedral with parameter $\rho$. 
$\Diamond$
\end{assumption}
 \begin{assumption}\label{assumption:unique}
For any $\hat{\bv{\pi}}$ and $\hat{\bv{r}}$ with $\|\hat{\bv{\pi}} - \bv{\pi} \|\leq \epsilon_z$ and $||  \hat{\bv{r}} - \bv{r}  ||\leq \epsilon_{r}$,   $\hat{g}^{\hat{\pi}}(\bv{\alpha})$ has a unique optimal solution over $\mathbb{R}^{M+2N}$. 
$\Diamond$
\end{assumption}

In the network optimization literature, e.g., \cite{neelynowbook}, \cite{javad-flow-stabile}, Assumption \ref{assumption:bdd-LM} is commonly assumed with  $\epsilon_r=\epsilon_z=0$.   By allowing $\epsilon_r, \epsilon_z>0$, we assume that systems with similar statistics have similar stability regions. Assumption \ref{assumption:share-poly} assumes that systems with similar statistics share a similar dual structure. This is not restrictive. In fact, when action sets are discrete, it is often the case that $g_k(\bv{\alpha})$ is polyhedral, which usually leads to a polyhedral structure of $\hat{g}^{\hat{\pi}}(\bv{\alpha})$. The uniqueness assumption is also often guaranteed by the utility maximization structure, e.g., \cite{eryilmaz_qbsc_ton07}.

%
\subsection{Queue and Utility Performance}
We first summarize the performance of $\mathtt{LRAM}$. 
\begin{theorem}\label{theorem:lram}
Suppose $T_{\delta_r}<\infty$ with probability $1$. Under $\mathtt{LRAM}$, we have for all $t\geq T_{\delta_r}+1$ that: 
\begin{eqnarray}
d_n(t)&\leq& d_{\max}\triangleq V\beta +r_{\max}, \,\forall\, n\\
Q_n(t)&\leq& Q_{\max}\triangleq\theta_1 +r_{\max}, \,\forall\, n\\
H_m(t)&\leq& H_{\max}\triangleq\theta_2 +h_{\max}, \,\forall\, m.  
\end{eqnarray}
Moreover, we have with probability $P_{\delta_r}$ that, 
\begin{eqnarray}
f_{\text{av}}^{\mathtt{LRAM}} \geq f_{\text{av}}^* - \frac{G+ r_{\max} \delta_r}{V} - 2N\beta\delta_r. \,\Diamond\label{eq:utility-per}
\end{eqnarray}
\end{theorem}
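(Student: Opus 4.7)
The plan is to split the proof into two parts: establishing the deterministic sample-path bounds on $d_n(t)$, $Q_n(t)$, $H_m(t)$, and then running a Lyapunov drift argument against an optimal randomized comparison policy to control the utility gap.

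For the queue bounds, I would argue by induction starting at $t = T_{\delta_r}+1$, where all queues are reset to zero. The bound $d_n(t) \leq V\beta + r_{\max}$ comes from the Quota step: since $U_n$ is concave with derivative at most $\beta$, whenever $d_n(t) > V\beta$ the objective $VU_n(\gamma) - d_n(t)\gamma$ is strictly decreasing in $\gamma \geq 0$, forcing $\gamma_n(t) = 0$. Hence $d_n$ can only grow when $d_n \leq V\beta$, and the per-slot growth is at most $r_{\max}$. The bounds on $Q_n$ and $H_m$ follow analogously from the threshold-based Admission step, which only injects new traffic/resources when the relevant queue is strictly below $\theta_1$ (resp.\ $\theta_2$), so the per-slot increment cannot exceed the corresponding arrival bound.

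For the utility bound, I would apply Lemma 1 with the \emph{true} reward $\bv{r}$ and compare LRAM's action to a hypothetical randomized action drawn from the optimal policy of Assumption 1 (the case $\epsilon_r=\epsilon_z=0$). On the favorable event $\{\|\hat{\bv{r}}-\bv{r}\|_{\max} \leq \delta_r\}$, which has probability at least $P_{\delta_r}$, the objectives $\Psi_r(\bv{b})$ and $\Psi_{\hat{r}}(\bv{b})$ differ by at most $\sum_n d_n(t)|r_n - \hat{r}_n| \leq N d_{\max}\delta_r$ uniformly in $\bv{b}$. Since LRAM's Resource step minimizes $\Psi_{\hat{r}}$ over $\bv{b}$ satisfying the no-underflow constraint, the standard ``$\hat{}$ to $*$'' chain of inequalities gives $\Psi_r(\bv{b}^{\mathtt{LRAM}}) \leq \Psi_r(\bv{b}^{*}) + 2Nd_{\max}\delta_r$ whenever $\bv{b}^{*}$ is feasible for LRAM.

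The crucial obstacle here is the no-underflow constraint: the fictitious comparison action need not respect \eqref{eq:no-underflow} by itself. This is exactly why the thresholds $\theta_1,\theta_2$ are calibrated as in \eqref{eq:theta-value1}--\eqref{eq:theta-value2} — they inject enough ``slack'' (scaling with $\beta_{\hat{r}}$, $\beta_\mu^u$, and $Nb_{\max}$) that, under the hard queue bounds of the first paragraph, an approximation of the randomized action is always realizable without underflow, and moreover $\tilde{\mu}_n(t)=\mu_n(t)$ holds pointwise. Against this comparison policy the cross terms in \eqref{eq:drift} have zero conditional expectation, and one arrives at $\Delta_V(t) \leq G - V f^*_{\text{av}} + 2Nd_{\max}\delta_r$. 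Telescoping this from $T_{\delta_r}+1$ to $T$, using that $L(t)=O(V^2)$ by the queue bounds, dividing by $T - T_{\delta_r}$, and letting $T\to\infty$ yields $\sum_n \overline{U_n(\gamma_n)} - \overline{c} \geq f^*_{\text{av}} - G/V - 2Nd_{\max}\delta_r/V$. Finally, boundedness of $d_n$ gives $\overline{\gamma}_n \leq \overline{r}_n$, and combining monotonicity of $U_n$ with Jensen's inequality converts the left-hand side to $U_{\text{total}}(\overline{\bv{r}}) - C_{\text{total}}$. Substituting $d_{\max} = V\beta + r_{\max}$ then yields a bound matching the form of \eqref{eq:utility-per}.
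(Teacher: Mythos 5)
Your first part (the deterministic bounds on $d_n$, $Q_n$, $H_m$) is exactly the paper's argument and is fine, and the bookkeeping in your second part (the $2Nd_{\max}\delta_r$ error from swapping $\Psi_{\hat r}$ and $\Psi_r$, the telescoping sum, Jensen's inequality, and $\overline{\gamma}_n\le\overline{r}_n$ from boundedness of $\bv{d}(t)$) is also on track. The gap is in how you dispose of the no-underflow constraint (\ref{eq:no-underflow}). You claim that the thresholds $\theta_1,\theta_2$ guarantee that ``an approximation of the randomized comparison action is always realizable without underflow.'' That is not true, and it is not the mechanism: right after the reset at $t=T_{\delta_r}+1$ all queues are zero (and more generally $H_m(t)$ can sit below $Nb_{\max}$), so the stationary randomized policy's allocation is simply infeasible at such times; no calibration of $\theta_1,\theta_2$ makes it realizable, and your chain $\Psi_r(\bv{b}^{\mathtt{LRAM}})\le\Psi_r(\bv{b}^*)+2Nd_{\max}\delta_r$ is stated to hold only ``whenever $\bv{b}^*$ is feasible for LRAM,'' which is precisely what fails.

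The actual argument runs in the opposite direction. With $\theta_1,\theta_2$ as in (\ref{eq:theta-value1})--(\ref{eq:theta-value2}), and using the hard bounds $d_n(t)\le V\beta+r_{\max}$, $Q_n(t)-\theta_1\le r_{\max}$, $H_m(t)-\theta_2\le h_{\max}$ together with properties (\ref{eq:mu-prop1})--(\ref{eq:mu-prop2}) and the definition of $\beta_{\hat r}$, one shows that the \emph{unconstrained} minimizer of $\Psi_{\hat r}$ over $\script{B}_{\bv{z}(t)}$ never allocates type-$m$ resource when $H_m(t)<Nb_{\max}$ and never serves queue $n$ when $Q_n(t)<\mu_{\max}$: zeroing out such a $b_{mn}$ strictly decreases $\Psi_{\hat r}$, a contradiction with optimality. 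Consequently LRAM's constrained minimizer coincides with the unconstrained one, so the RHS of (\ref{eq:drift}) under LRAM can legitimately be compared against \emph{any} alternative action, feasible or not (in particular against the dual function, which upper-bounds $Vf^*_{\text{av}}$), and simultaneously $\tilde\mu_n(t)=\mu_n(t)$, so the reward actually collected has mean $r_n(\bv{z}(t),\mu_n(t))$ as used in (\ref{eq:Psi}). Without this step, the comparison to $f^*_{\text{av}}$ (and your claim that the cross terms vanish against the comparison policy) has no justification, so as written the proof of (\ref{eq:utility-per}) is incomplete.
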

\begin{proof}
See Appendix C. 
\end{proof}

The last term in (\ref{eq:utility-per}) involves the estimation error $\delta_r$. This can be viewed as the performance loss  due to inaccurate reward information. 
We remark here that the deterministic queueing bounds are important for both algorithm implementation and performance guarantee. This is so because errors in reward function  estimation will be amplified by the queue sizes when used in decision making, i.e., (\ref{eq:Psi}). 
%


We now present the performance results for $\mathtt{DRAM}$. 
\begin{theorem}\label{theorem:dram}
Suppose $\max(T_{\delta_r}, T_{\delta_z})<\infty$ with probability $1$. Suppose $g(\bv{\alpha})$ is polyhedral with $\rho=\Theta(1)>0$, and that $\delta_z\leq\epsilon_z$ and $\delta_r\leq\epsilon_r$, and $\bv{\alpha}^*+\bv{\theta}\succ\bv{0}$. Then, with a sufficiently large $V$, we have with probability $P_{\delta_z}P_{\delta_r}$ that, 
under $\mathtt{DRAM}$, 
\begin{eqnarray}
f_{\text{av}}^{\mathtt{DRAM}} \geq f_{\text{av}}^* - \frac{G}{V} - O(1/V+\delta_r). \label{eq:utility-per-dram}
\end{eqnarray}
Also, the fraction of time dropping happens is $O(V^{-\log(V)})$. Moreover, for all  queues, there exist $\Theta(1)$ constants $D, K, a$ such that: 
\begin{eqnarray}
\prob{ d_n(t)\geq \frac{3}{2}\zeta +D+\nu } &\leq& ae^{-K\nu} \label{eq:dram-d-bdd}\\
\prob{ Q_n(t)\geq \frac{3}{2}\zeta +D+\nu } &\leq& ae^{-K\nu} \label{eq:dram-q-bdd}\\
\prob{ H_m(t)\geq \frac{3}{2}\zeta +D+\nu } &\leq& ae^{-K\nu}. \label{eq:dram-h-bdd}
\end{eqnarray}
Thus, all queues are stable. $\Diamond$
\end{theorem}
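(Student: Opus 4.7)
The plan is to build on the LRAM analysis but shift the dynamics by $\hat{\bv{\alpha}}^*$ so that, by construction, the algorithm begins close to the optimal Lagrange multipliers. Concretely, $\mathtt{DRAM}$ applies $\mathtt{RAM}$ to the translated queues $\hat{\bv{Q}}(t), \hat{\bv{H}}(t), \hat{\bv{d}}(t)$, so the standard Lyapunov function $L(t)=\tfrac12(\|\hat{\bv{Q}}\|^2+\|\hat{\bv{H}}\|^2+\|\hat{\bv{d}}\|^2)$ evaluated on the shifted queues plays the same role as in Lemma~\ref{lemma:drift}, and the algorithm can be interpreted as (approximately) minimizing a one-slot drift-plus-penalty expression at each step. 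I would first verify that the drift bound of Lemma~\ref{lemma:drift} carries over with the shifts $\bv{\theta}_1,\bv{\theta}_2$ absorbed into $\hat{\bv{\alpha}}^{q*},\hat{\bv{\alpha}}^{h*}$, and with an extra additive $O(\delta_r)$ term coming from the gap $\hat{r}_n-r_n$ appearing inside $\Psi_r$.

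For the utility bound $f_{\text{av}}^{\mathtt{DRAM}}\ge f_{\text{av}}^*-G/V-O(1/V+\delta_r)$, I would combine three ingredients: (i) the drift-plus-penalty inequality on the shifted queues; (ii) Assumption~\ref{assumption:bdd-LM} applied to the perturbed system $(\hat{\bv{\pi}},\hat{\bv{r}})$, which furnishes an $\eta_0$-slack randomized policy that can be used to upper bound the expected RHS of the drift; and (iii) a comparison of $\hat g^{\hat\pi}(\hat{\bv\alpha}^*)$ with $g(\bv\alpha^*)=Vf_{\text{av}}^*$. Strong duality and the Lipschitz dependence of $g_k$ on $(\bv\pi,\bv r)$ give $|\hat g^{\hat\pi}(\hat{\bv\alpha}^*)-g(\bv\alpha^*)|=O(V\delta_z+V\delta_r)$, and telescoping the drift yields the stated utility bound after dividing by $V$.

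For the concentration bounds and the dropping estimate, the key is to show that, with high probability, the shifted queues $\hat{\bv{Q}}(t),\hat{\bv{H}}(t),\hat{\bv{d}}(t)$ have exponentially decaying tails centered near $\bv{0}$. First, I would use Assumption~\ref{assumption:share-poly} and \ref{assumption:unique} plus the polyhedral inequality (\ref{eq:polyhedral}) to show that $\|\hat{\bv{\alpha}}^*-\bv{\alpha}^*\|=O(\delta_z+\delta_r)$, since the perturbed dual $\hat g^{\hat\pi}$ differs from $g$ by $O(V(\delta_z+\delta_r))$ in sup norm and still satisfies the $\rho$-polyhedral inequality. Next, I would apply a Hajek-type attraction argument (as in Huang–Neely-style delay analyses for polyhedral networks): because the minimizer of $\Psi_r$ translates approximately into a negative Lyapunov drift whenever any shifted queue is outside an $O(1)$ ball, the shifted queue vector has an exponentially decaying deviation from $\bv{0}$. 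Shifting back gives (\ref{eq:dram-d-bdd})--(\ref{eq:dram-h-bdd}) with $\tfrac{3}{2}\zeta+D$ accounting for both the $\bv{\zeta}$ offset and the $O(\delta_z+\delta_r)$ gap between $\hat{\bv\alpha}^*$ and $\bv\alpha^*$.

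The dropping event requires $\sum_n b_{mn}(t)>H_m(t)$, which translates into $\hat H_m(t)<-\bigl(\hat{\bv\alpha}^{h*}_m-\zeta\bigr)+\text{const}$, i.e., a downward deviation of order $\zeta=2\max(\delta_z V\log V^2,\log V^2)$ of the shifted queue below its centered location. Plugging $\nu=\Theta(\zeta)$ into the exponential tail bound gives probability $O(e^{-K\log V^2})=O(V^{-\log V})$, which is the claimed fraction of dropping slots; this in turn justifies that the dropping correction contributes only $O(V^{-\log V})$ loss to the utility, compatible with the $O(\delta_r)$ error term. The main obstacle I anticipate is the rigorous exponential attraction step: one must show that even with learning errors $\delta_r,\delta_z$ and with $\hat{r}$ in place of $r$ inside $\Psi_r$, the effective Lyapunov drift on the shifted queues remains bounded by a strictly negative constant outside an $O(1)$ neighborhood of the origin, uniformly over the high-probability event $\{\delta_r\le\epsilon_r,\delta_z\le\epsilon_z\}$. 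This is where Assumption~\ref{assumption:share-poly} and the choice $\bv{\alpha}^*+\bv\theta\succ\bv 0$ are crucial: the former preserves the polyhedral geometry under perturbation, and the latter ensures the shift $\bv\zeta$ does not push the process into a boundary regime where the drift could degenerate.
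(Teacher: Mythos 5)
Your overall skeleton (shifted queues, a perturbation bound on the dual multiplier, an exponential attraction argument, and a tail estimate for the dropping event) matches the paper's strategy, but two of your key steps have genuine problems.

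First, the multiplier perturbation bound is both mis-scaled and mis-centered. Since the empirical dual differs from the true dual by $O(V(\delta_z+\delta_r))$ in sup norm and the polyhedral constant is $\rho=\Theta(1)$, the polyhedral inequality (\ref{eq:polyhedral}) gives a multiplier gap of order $V(\delta_z+\delta_r)$, not $O(\delta_z+\delta_r)$ as you claim; this $V$-scaling is precisely why $\mathtt{DRAM}$ needs $\zeta=2\max(\delta_zV\log(V)^2,\log(V)^2)$. Moreover, the attraction point cannot be taken to be $\bv{\alpha}^*$ (equivalently, ``near $\bv{0}$'' for the shifted queues): since the matching phase runs with $\hat{\bv{r}}$, the natural attractor is the intermediate multiplier $\tilde{\bv{\alpha}}^*$ associated with the \emph{true} distribution $\bv{\pi}$ and the \emph{learned} rewards $\hat{\bv{r}}$ (Lemma \ref{lemma:dist-bdd} and Theorem \ref{theorem:attraction} in the paper). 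Only the $O(\delta_z V)$ gap between $\hat{\bv{\alpha}}^*$ and $\tilde{\bv{\alpha}}^*$ is absorbed by $\zeta$; the $O(\delta_r V)$ gap between $\tilde{\bv{\alpha}}^*$ and $\bv{\alpha}^*$ is never absorbed and, under your centering, would inject a $\Theta(\delta_r V)$ term into (\ref{eq:dram-d-bdd})--(\ref{eq:dram-h-bdd}), which the theorem does not allow ($D=\Theta(1)$ and $\zeta$ involves only $\delta_z$).

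Second, your utility argument does not deliver the stated bound and skips the step that makes it work. Passing through $|\hat g^{\hat\pi}(\hat{\bv\alpha}^*)-g(\bv\alpha^*)|=O(V\delta_z+V\delta_r)$ and dividing by $V$ leaves an $O(\delta_z)$ loss, whereas (\ref{eq:utility-per-dram}) has no $\delta_z$ term at all --- that is the main point of the theorem. The paper avoids $\delta_z$ by never comparing dual optimal values: it keeps the drift of Lemma \ref{lemma:drift} on the actual queues, adds a linear \emph{drift-augmentation} term $\Delta_a(t)$ to rewrite the right-hand side in the shifted variables, bounds it by $g(\hat{\bv d}(t),\hat{\bv Q}(t),\hat{\bv H}(t))\ge Vf^*_{\text{av}}$ (only Theorem \ref{theorem:dual-fav} is needed, not strong duality), and then shows $\lim_T\frac1T\sum_t\expect{\Delta_a(t)}=O(1/V)$ by combining Lemma \ref{lemma:q-avgrate} with the $O(1/V^2)$ probabilities of near-empty queues and $\hat{\bv\alpha}^*=O(V)$ from (\ref{eq:opt-multi-bdd}). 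Your proposal has no counterpart to this augmentation-average step; note also that you cannot simply claim Lemma \ref{lemma:drift} ``carries over'' to a Lyapunov function defined on the shifted queues, because the $\max[\cdot,0]$ boundaries (and the dropping modification of the $\bv H$-dynamics) act at the \emph{actual} zero levels, not at the shifted ones --- controlling these boundary effects is exactly what the augmentation plus the small-queue probability bounds accomplish. Your treatment of the dropping fraction via $\nu=\Theta(\zeta)$ in the exponential tail is fine and indeed matches the claimed $O(V^{-\log V})$ rate.
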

\begin{proof}
See Appendix D. 
\end{proof}

Note that if we have $\delta_r=0$ with $T_{\delta_r}=0$ and $P_{\delta_r}=1$, then Theorem \ref{theorem:lram} recovers the known $[O(1/V), O(V)]$ utility-delay tradeoff for stochastic network problems \cite{neelynowbook}. On the other hand, if we also have $\delta_z=0$ with $T_{\delta_z}=0$ and $P_{\delta_z}=1$, then $\mathtt{DRAM}$ provides a new way for achieving the near-optimal $[O(1/V), O(\log(V)^2)]$ utility-delay tradeoff. 

In both $\mathtt{LRAM}$ and $\mathtt{DRAM}$,  it is possible to  continuously update the reward function estimations during the control steps. However,  this does not automatically guarantee that we can always eliminate the effect of $\delta_r$. This is so because the initial estimation $\hat{\bv{r}}$ may affect what options will be continuously updated later. On the other hand, the same performance results will hold if further updates do not increase $\delta_r$. 


\subsection{Convergence time}
Here we look at another important performance metric - algorithm convergence time, which  characterizes the time it takes for the algorithm to enter the ``steady state.'' Faster convergence implies better robustness against system statistics changes and higher efficiency in  resource allocation, and is particularly important when system statistics can change.  
The formal definition of convergence time is as follows  \cite{huang-learning-sig-14}. 
\begin{definition} \label{definition:convergence}
For a given constant $D$, the $D$-convergence time of a control algorithm $\Pi$, denoted by $T^{\Pi}_{D}$, is the time it takes for the queue vector $(\bv{d}(t), \bv{Q}(t), \bv{H}(t))$ ($(\hat{\bv{d}}(t), \hat{\bv{Q}}(t), \hat{\bv{H}}(t))$ under $\mathtt{DRAM}$) to get to within $D$ distance of $\bv{\alpha}^*+ \bv{\theta}$, i.e.,
\begin{eqnarray}
T_{D}^{\Pi}\triangleq\inf\{t\,|\, ||(\bv{d}(t), \bv{Q}(t), \bv{H}(t))-(\bv{\alpha}^*+\bv{\theta})||\leq D\}.\,\,\,\Diamond\label{eq:con-time}
\end{eqnarray}
\end{definition}

This definition of convergence time concerns about when an algorithm enters its ``optimal state.'' It is different from the metrics considered in \cite{li-convergence-13} and \cite{neely-convergence-15}, which concern about the time it takes for the objective value and constraints to be within certain accuracy. 
With Definition \ref{definition:convergence}, we have the following results: 
\begin{theorem}\label{theorem:convergence-time}
Suppose $g(\bv{\alpha})$ is polyhedral with $\rho=\Theta(1)>0$,   $\delta_z\leq\epsilon_z$ and $\delta_r\leq\epsilon_r$, and $\bv{\alpha}^*+\bv{\theta}\succ\bv{0}$. Then, with a sufficiently large $V$, we have: 
\begin{eqnarray}
\expect{T^{\mathtt{LRAM}}_{D_1}} & =& O(T_{\delta_r} + \Theta(V))\,\,w.p.\,\,P_{\delta_{r}} \label{eq:conv-lram}\\
\expect{T^{\mathtt{DRAM}}_{D_2}} & =& O((T_l  +  \Theta(\delta_{z}V))\,\,w.p.\,\,P_{\delta_{r}}P_{\delta_{z}}.  \label{eq:conv-dram}
\end{eqnarray}
Here $T_l\triangleq\max(T_{\delta_r}, T_{\delta_z})$ denotes the total learning time in $\mathtt{DRAM}$, $D_1\triangleq\Theta(\delta_{r}V)+\Theta(1)$, and $D_2\triangleq\Theta(\delta_{r}V)+\Theta(1)$.  $\Diamond$
\end{theorem}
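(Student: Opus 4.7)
The plan is to split each algorithm's convergence time into a learning phase (of length $T_{\delta_r}$ for $\mathtt{LRAM}$ and $T_l = \max(T_{\delta_r}, T_{\delta_z})$ for $\mathtt{DRAM}$) and a post-learning \emph{drift phase} during which the queue vector travels toward the true dual optimum $\bv{\alpha}^* + \bv{\theta}$. A common polyhedral-drift lemma will govern the drift phase, while a sensitivity analysis of the empirical dual problem will quantify how close the starting point of the drift phase is to $\bv{\alpha}^* + \bv{\theta}$.

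First I would prove a polyhedral-drift lemma: when $\mathtt{RAM}$ is run with (possibly perturbed) parameters $(\hat{\bv{r}}, \hat{\bv{\pi}})$, the queue process $\bv{y}(t) = (\bv{d}(t), \bv{Q}(t), \bv{H}(t))$ (the shifted process $\hat{\bv{y}}(t)$ under $\mathtt{DRAM}$) satisfies
\[
\mathbb{E}\!\left\{\|\bv{y}(t+1) - \hat{\bv{\alpha}}^*\| - \|\bv{y}(t) - \hat{\bv{\alpha}}^*\| \,\big|\, \bv{y}(t)\right\} \leq -\rho',
\]
whenever $\|\bv{y}(t) - \hat{\bv{\alpha}}^*\|$ exceeds a $\Theta(1)$ threshold, where $\rho' = \Theta(\rho) > 0$ and $\hat{\bv{\alpha}}^*$ is the minimizer of the shifted empirical dual actually used by the algorithm. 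The intuition is that the $\mathtt{RAM}$ step is a stochastic subgradient iteration on $\hat{g}^{\hat{\pi}}$, and the sharp minimum guaranteed by (\ref{eq:polyhedral}) together with Assumption \ref{assumption:share-poly} yields a contraction of order $\Theta(\rho)$ per slot toward $\hat{\bv{\alpha}}^*$; Assumption \ref{assumption:unique} makes the target well-defined.

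For $\mathtt{LRAM}$, after the learning phase the queues reset to zero while $\hat{\bv{\alpha}}^* = \Theta(V)$ (driven by $\bv{\theta}$, which is $\Theta(V)$). The drift lemma then gives expected post-learning time $O(V)$ to reach within $\Theta(1)$ of $\hat{\bv{\alpha}}^*$. A dual-sensitivity argument, combining the $O(\delta_r)$-Lipschitzness of $\hat{g}(\bv{\alpha})$ in $\bv{r}$ with the sharpness (\ref{eq:polyhedral}), yields $\|\hat{\bv{\alpha}}^* - (\bv{\alpha}^* + \bv{\theta})\| = O(\delta_r V)$, so the queue ends up within $D_1 = \Theta(\delta_r V) + \Theta(1)$ of the true target, delivering (\ref{eq:conv-lram}). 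For $\mathtt{DRAM}$, the dual learning step places the shifted hat-queue at $\hat{\bv{y}}(T_L+1) = \hat{\bv{\alpha}}^* - \bv{\zeta}$, i.e., $\Theta(\|\bv{\zeta}\|)$ from $\hat{\bv{\alpha}}^*$; extending the sensitivity argument to perturbations in both $\hat{\bv{\pi}}$ and $\hat{\bv{r}}$ gives $\|\hat{\bv{\alpha}}^* - (\bv{\alpha}^* + \bv{\theta})\| = O((\delta_z + \delta_r)V)$. Applying the drift lemma from this starting point then brings $\hat{\bv{y}}(t)$ within $D_2$ of $\bv{\alpha}^* + \bv{\theta}$ in expected time $O(\delta_z V)$, giving (\ref{eq:conv-dram}).

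The main obstacle will be making the polyhedral-drift lemma robust to the $\mathtt{DRAM}$-specific modifications: the constant shift by $\bv{\zeta}$, the occasional dropping step, and the use of estimated $\hat{\bv{r}}$ inside (\ref{eq:Psi}). The dropping event has probability $O(V^{-\log V})$ by Theorem \ref{theorem:dram} and perturbs the queue by at most $O(b_{\max})$ when it occurs, so its contribution to the expected drift is negligible. The other delicate ingredient is the dual-sensitivity bound $\|\hat{\bv{\alpha}}^* - (\bv{\alpha}^* + \bv{\theta})\| = O((\delta_z + \delta_r)V)$: one must convert the $O(\delta_z + \delta_r)$ uniform perturbation of $\hat{g}^{\hat{\pi}}$ into a correspondingly small shift of its minimizer, which is made possible precisely by the sharpness in (\ref{eq:polyhedral}) together with Assumption \ref{assumption:share-poly}.
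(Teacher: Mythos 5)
Your overall scaffolding (learning phase plus a polyhedral-drift phase, a per-slot $\Theta(1)$ contraction when far from a dual attractor, and a dual-sensitivity bound folded into $D_1,D_2$) is the same as the paper's, and your $\mathtt{LRAM}$ argument essentially reproduces it. But there is a genuine gap in your key lemma: you take the drift attractor to be $\hat{\bv{\alpha}}^*$, the minimizer of the \emph{empirical} dual, on the grounds that the post-learning $\mathtt{RAM}$ step is ``a stochastic subgradient iteration on $\hat{g}^{\hat{\pi}}$.'' It is not. After learning, the realized states $\bv{z}(t)$ are still drawn from the true $\bv{\pi}$; the estimate $\hat{\bv{\pi}}$ never enters the online decisions, only the dual-learning step (\ref{eq:dual-learning}) that sets the initial point of the shifted queues. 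Hence the conditional expected queue change is a subgradient of $\tilde{g}(\bv{\alpha})$, the dual with the true $\bv{\pi}$ and the estimated $\hat{\bv{r}}$, and the correct attractor is its minimizer $\tilde{\bv{\alpha}}^*$. With your attractor the drift inequality is simply false whenever $\|\hat{\bv{\alpha}}^*-\tilde{\bv{\alpha}}^*\|$, which can be as large as $\Theta(\delta_z V)$ by Lemma \ref{lemma:dist-bdd}, exceeds your $\Theta(1)$ threshold: the process is pulled toward $\tilde{\bv{\alpha}}^*$, not toward $\hat{\bv{\alpha}}^*$.

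This misidentification also costs you the stated radius. If the process converged to a $\Theta(1)$ ball around $\hat{\bv{\alpha}}^*$ and you then translated by $\|\hat{\bv{\alpha}}^*-(\bv{\alpha}^*+\bv{\theta})\|=O((\delta_z+\delta_r)V)$, your $D_2$ would necessarily carry a $\Theta(\delta_z V)$ term, weaker than the theorem's $D_2=\Theta(\delta_r V)+\Theta(1)$. The paper's decomposition avoids this precisely by separating the roles of the two estimates: $\hat{\bv{r}}$ perturbs the attractor (so only $\|\bv{\alpha}^*-\tilde{\bv{\alpha}}^*\|=O(\delta_r V)$ enters $D_1,D_2$), while $\hat{\bv{\pi}}$ perturbs only the starting point (so $\|\hat{\bv{\alpha}}^*-\tilde{\bv{\alpha}}^*\|=O(\delta_z V)$ enters only the travel time, giving the $\Theta(\delta_z V)$ term in (\ref{eq:conv-dram})). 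Concretely, the fix is to state your drift lemma with the Lyapunov function $\frac{1}{2}\|(\bv{d}(t),\bv{Q}(t),\bv{H}(t))-\bv{\theta}-\tilde{\bv{\alpha}}^*\|^2$, use the subgradient/polyhedral property of $\tilde{g}$ (Assumption \ref{assumption:share-poly} with $\hat{\bv{\pi}}=\bv{\pi}$), and then invoke the two bounds of Lemma \ref{lemma:dist-bdd} separately, exactly as in the paper's Appendix E; your remarks on the dropping events and on converting uniform dual perturbations into multiplier shifts are fine and can be kept.
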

\begin{proof}
See Appendix E. 
\end{proof}

Combining Theorems \ref{theorem:lram}, \ref{theorem:dram}, and  \ref{theorem:convergence-time}, we see that $\delta_r$ largely affects the overall utility performance (reflected by $D_1$ and $D_2$), while $\delta_z$ can greatly improve the convergence time and delay! This indicates that information of different system components can play very different roles in algorithm performance and learning accuracies should be carefully chosen for meeting a desired performance goal. 




\subsection{Necessity in Controlling $\delta_r$}\label{subsection:2-queue-system}
Here we provide a simple example showing that it is necessary to control $\delta_r$ for good utility performance.  Hence, it is important to learn the reward value for each matching option. 
Consider the case when $N=2$ and $M=1$. Suppose $e(t)=A_1(t)=A_2(t)=1$ for all $t$. Also suppose $\bv{b}(t)\in\{(0, 1), (1, 0), (0, 0)\}$, that is, at every time $t$, we can only allocate resource to one or zero queue. 
Suppose $c(t)=0$, $\mu_n(t)=b_n(t)$, and $r_n(t) =\tilde{\mu}_n(t)$. Finally, assume that $U_1(r_1)=\log(1+r_1)$ and $\log(1+2r_2)$. 

In this case, the true optimal takes place at $\overline{r}_1=1/4$ and $\overline{r}_2=3/4$ with  $U_{\text{total}}=0.7828$. Now suppose we incorrectly estimate the rewards to be $r_n(t) =(1+\delta_{n})\tilde{\mu}_n(t)$. Then, one can show that the optimal rewards become:  
\begin{eqnarray}
r_1&=&\frac{2(1+\delta_1) (1+\delta_2) - 2(1+\delta_2) +(1+\delta_1)}{ 4(1+\delta_1) (1+\delta_2)  }\\ 
r_2&=&\frac{ 2(1+\delta_1) (1+\delta_2) + 2(1+\delta_2) -(1+\delta_1)}{ 4(1+\delta_1) (1+\delta_2)  }, 
\end{eqnarray}
which is roughly $r_1\approx \frac{1}{4} + \frac{2\delta_1-\delta_2}{4}$ and $r_2\approx \frac{3}{4} - \frac{2\delta_1-\delta_2}{4}$. Thus, the resulting optimal utility is given by: 
\begin{eqnarray}
U_{\text{total}} \approx 0.7828 - \frac{|2\delta_1| +|\delta_2|}{5}. 
\end{eqnarray}
Therefore, in order to obtain an $O(1/V)$ close-to-optimal utility, it is necessary to ensure that $\max(|\delta_1|, |\delta_2|)=O(1/V)$. 



\section{Simulation} \label{section:simulation}
In this section, we present simulation results for our algorithms. We consider a system that has $N=2$ and $M=1$. We assume that $e(t)$ is $0$ or $2$ with equal probabilities.  $A_1(t) $ is $0$ or $2$ with equal probabilities and $A_2(t)$ is $1$ or $2$ with equal probabilities. $\bv{b}(t)\in\{(0, 1), (1, 0), (0, 0)\}$, i.e., at every time $t$, we  allocate one unit  resource to one or zero queue. We set $c(t) = b_1(t)+b_2(t)$ and $\mu_n(t)=b_n(t)$, and $\gamma_n(t)\in\{0, 1, 2\}$. 
There are two system states $\omega(t)\in\{1, 2\}$. In each state, the reward functions are given by $r_n(t) =w_n(\omega(t))\tilde{\mu}_n(t)$, where $w_1(1)=0.8$ and $w_2(1)=1$ and $w_1(2)=1$ and $w_2(2)=0.8$. Thus, the system state indicates which tasks are preferred under the specific condition. 
Every time the corresponding reward is either  $0.5r_n(t)$ or $1.5r_n(t)$, with equal probabilities. 
Finally, we assume that $U_1(r_1)=1.2\log(1+2r_1)$ and $U_2(r_2)=1.2\log(1+4r_2)$. 

From the definitions, we have that $\beta=5$, $\beta_{\mu}^u=\beta_{\mu}^l=1$ and $\beta_{\hat{r}} = \max_{\omega(t), n} \hat{r}_n(\omega(t))$. We also set $r_{\max}=2$, $\mu_{\max}=1$ and $b_{\max}=1$, $h_{\max}=2$. 
We simulate both $\mathtt{LRAM}$ and $\mathtt{DRAM}$ with $V=\{10 ,20, 50, 80, 100\}$. According to (\ref{eq:theta-value1}) and (\ref{eq:theta-value2}), $\theta_1=(5V+2)\beta_{\hat{r}} +4$ and $\theta_2 = (5V +2)\beta_{\hat{r}} +3$. 
In $\mathtt{DRAM}$, we set $\zeta=\log(V)^2$. We use $\mathtt{TBS}(s_{th})$ to estimate $\bv{r}$ and set $s_{th}=\log(V)^2$, and use $\mathtt{TLS}(T_{th})$ to estimate $\bv{\pi}$ and set $T_{th}=T_{tbs}$. 
In $\mathtt{LRAM}$, we randomly add or subtract the estimation error $\delta_r$ from the true values. 
\begin{figure}[cht]
\vspace{-.08in}
\begin{center}
\includegraphics[height=1.6in, width=3.2in]{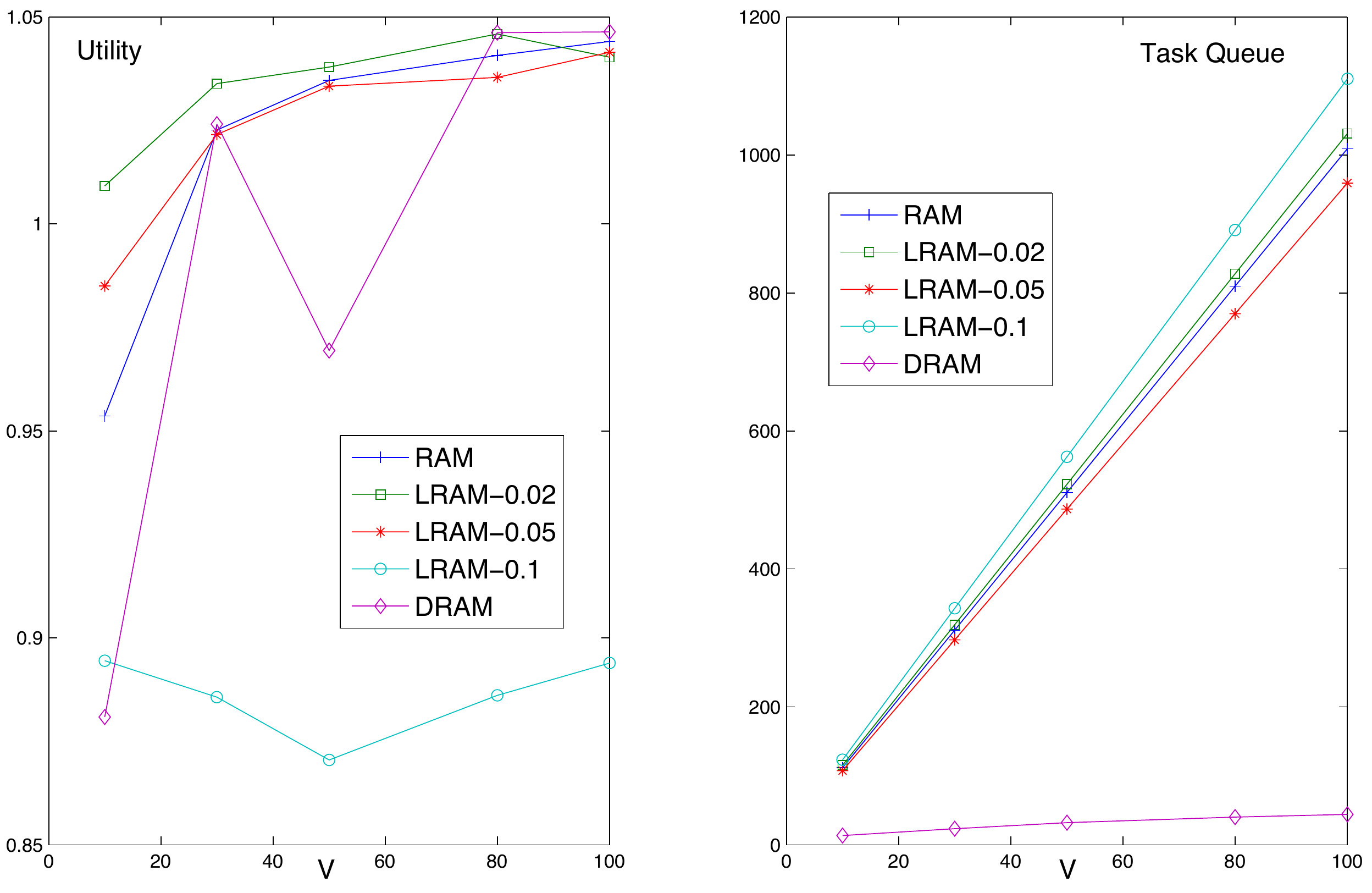}
\vspace{-.1in}
\end{center}
\vspace{-.1in}
\caption{Utility performance and task queue size. }\label{fig:utility}
\vspace{-.1in}
\end{figure}

Fig. \ref{fig:utility} first shows the utility performance and the task queue behavior of $\mathtt{LRAM}$ and $\mathtt{DRAM}$, where the number after $\mathtt{LRAM}$ denotes $\delta_r$. We  see from the left plot that except for $\delta_r=0.1$, $\mathtt{LRAM}$ performs very well under all other error values, suggesting that estimation error indeed plays an important role in system utility. We also see that $\mathtt{DRAM}$ performs very well starting from $V\geq 50$. The right plot shows the backlog (delay) performance under different schemes. It is evident that $\mathtt{DRAM}$ achieves an $O(\log(V)^2)$ delay in this case, while all other variants possess an $O(V)$ delay. This demonstrates the importance of incorporating system dynamics information into algorithm design.

Fig. \ref{fig:queue} then shows the resource queue  $H(t)$ and  deficit queues $\bv{d}(t)$. We see  that $\mathtt{DRAM}$ ensures an $O(\log(V)^2)$ average resource queue, while other algorithms result in an $O(V)$ queue size.  This implies that $\mathtt{DRAM}$ ensures a very short stay in the system for the  resource items! This feature is  particularly useful if the resource items are human users, e.g., in crowdsourcing. 
\begin{figure}[cht]
\begin{center}
\includegraphics[height=1.6in, width=3.2in]{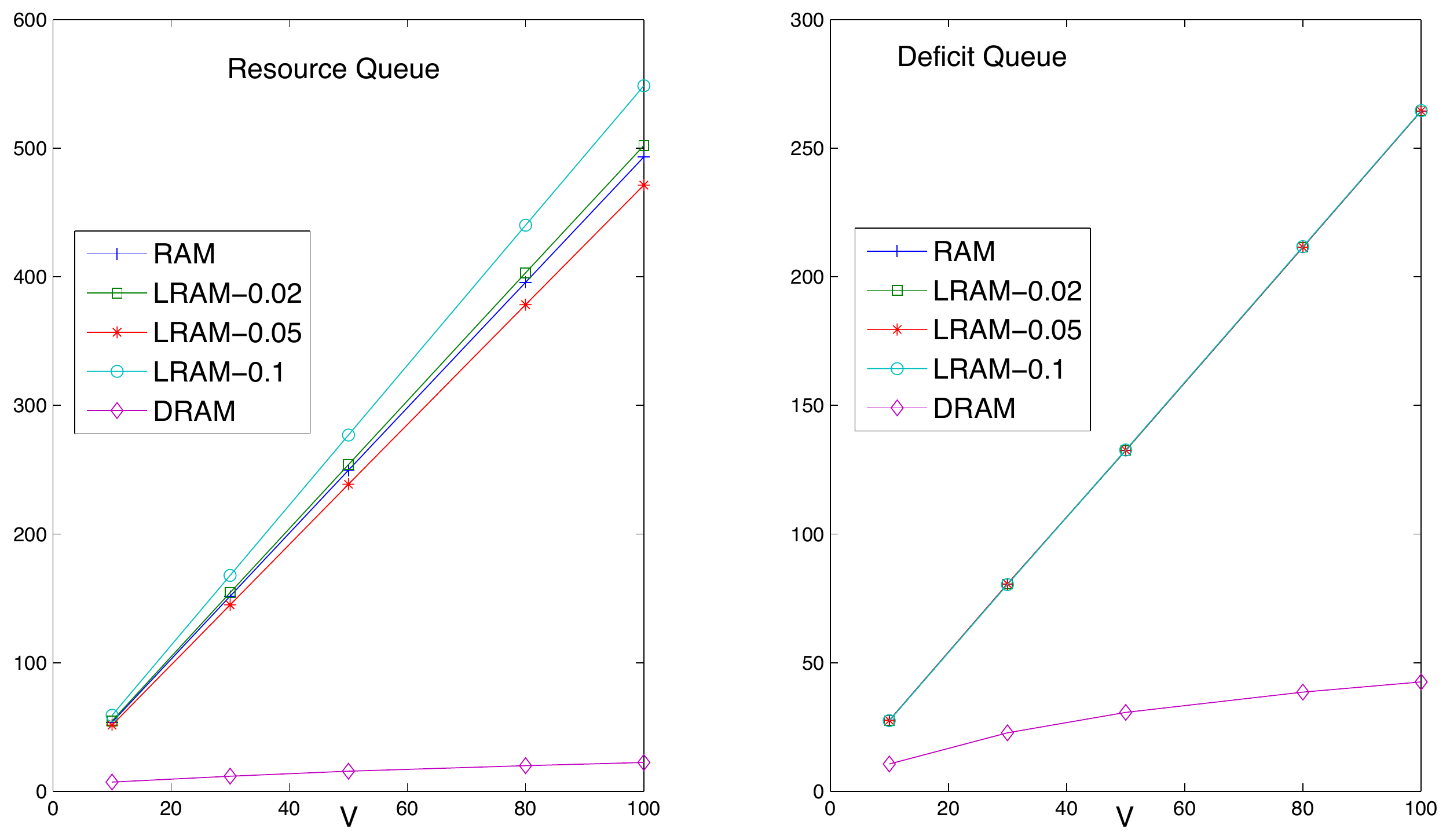}
\vspace{-.1in}
\end{center}
\vspace{-.1in}
\caption{Resource queue and deficit queue sizes. }\label{fig:queue}
\vspace{-.1in}
\end{figure}


Finally, Fig. \ref{fig:converge} shows the convergence behavior of the algorithms for $V=100$. Here we show the resource queue value as its convergence time dominates the others. We see that $\mathtt{RAM}$ takes an $O(V)$ time to converge, which is expected. We also observe that $H(t)$ under $\mathtt{LRAM}$-$0.05$ and $\mathtt{LRAM}$-$0.1$ converge to values slightly above those under $\mathtt{RAM}$. This explains why their performance is slightly worse. 
On the other hand, we also see that $\mathtt{DRAM}$ converges quickly. The reason its steady state value is slightly above that under $\mathtt{RAM}$ is due to the inaccuracy of $\hat{\bv{r}}$. Even in this case, we see that there is a $2.5\times$ convergence speedup (most of the learning time is due to sampling) and $\mathtt{DRAM}$ achieves very good performance. In the case when $\bv{r}$ can be obtained from other data source beforehand, which can commonly be done in practice, e.g., in online advertising, we see that $\mathtt{DRAM}$ (called state-only $\mathtt{DRAM}$ in this case) achieves a $10\times$ convergence speedup ($50$ slots vs. $500$ slots)! 
\begin{figure}[cht]
\vspace{-.08in}
\begin{center}
\includegraphics[height=1.5in, width=3.2in]{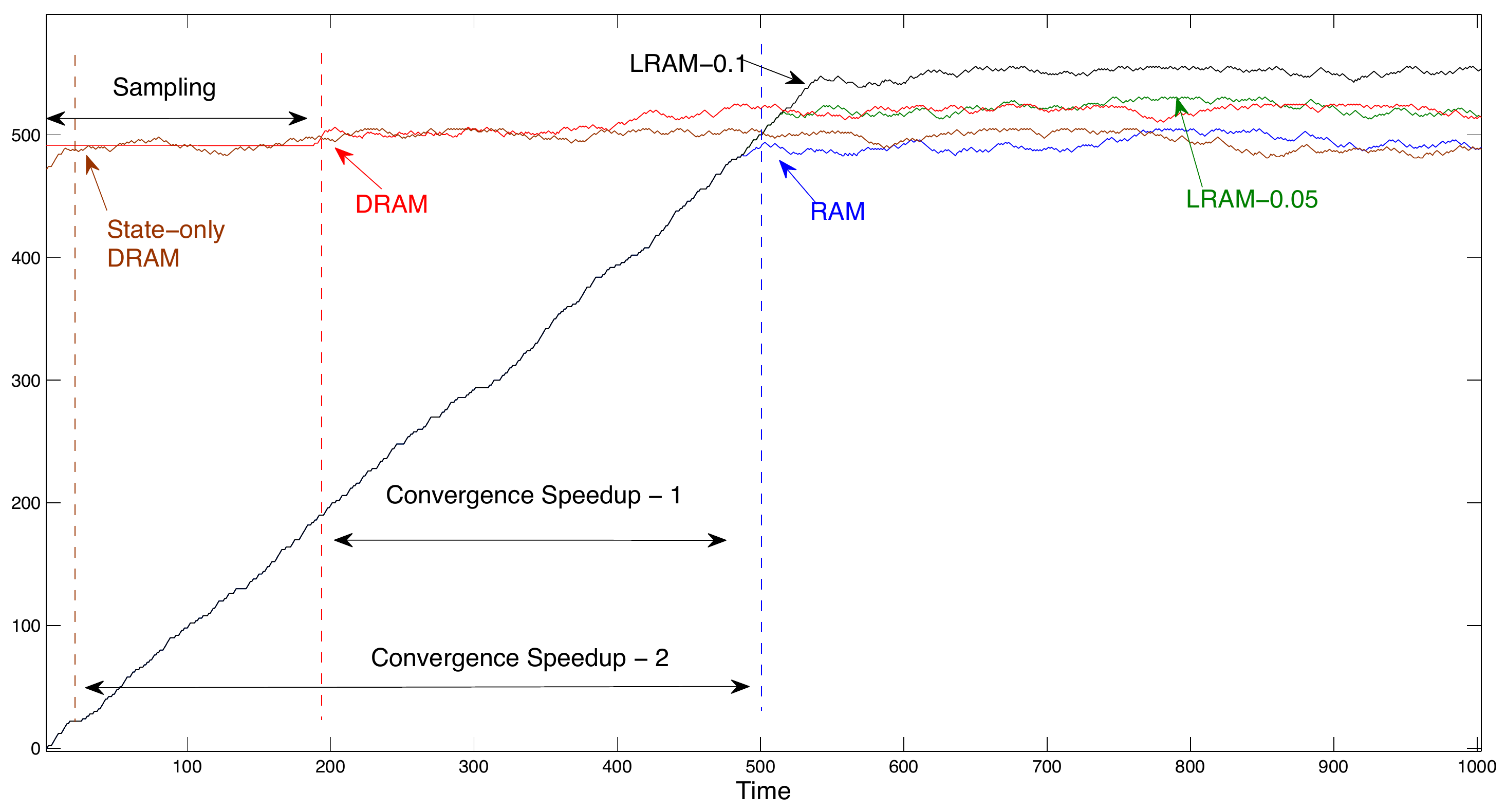}
\vspace{-.1in}
\end{center}
\vspace{-.1in}
\caption{Convergence of $\mathtt{LRAM}$ and $\mathtt{DRAM}$ for $V=100$. }\label{fig:converge}
\vspace{-.1in}
\end{figure}

We observe in all simulation instances that no dropping occurs. This demonstrates the effectiveness of the algorithms and validates Theorem \ref{theorem:dram}. 


\section{Conclusion}\label{section:conclusion}
In this paper, we study the problem of optimal matching  with queues in dynamic systems. We develop two online learning-aided algorithms $\mathtt{LRAM}$ and $\mathtt{DRAM}$ for resolving the challenging underflow problem and to achieve near-optimality. We show that $\mathtt{LRAM}$  achieves an $O(\epsilon+\delta_r)$ system utility, for any $\epsilon>0$, while ensuring an $O(1/\epsilon)$ delay bound and an $O(1/\epsilon)$ algorithm convergence time. $\mathtt{DRAM}$, on the other hand,  guarantees a similar $O(\epsilon+\delta_r)$ system utility, while  achieving an $O(\delta_{z}/\epsilon + \log(1/\epsilon)^2)$ delay bound and an $O(\delta_z/\epsilon)$ algorithm convergence time, which can be significantly better compared to $\mathtt{LRAM}$ when $\delta_z$ is small. Our algorithms and results reveal the interesting fact that different system information can play very different roles in algorithm performance and provide insights into joint learning-control algorithm design for dynamic systems. 

$\vspace{-.2in}$
\bibliographystyle{unsrt}
\bibliography{mybib}

\section*{Appendix A -- Proof of Lemma \ref{lemma:drift}}
We prove Lemma \ref{lemma:drift} here. 
\begin{proof} (Lemma \ref{lemma:drift}) 
Using the queueing dynamics (\ref{eq:q-dyn}) and (\ref{eq:h-dyn}), we have: 
\begin{eqnarray*}
\hspace{-.3in}&&(Q_n(t+1)-\theta_1)^2\leq (Q_n(t)-\theta_1)^2  \\
\hspace{-.3in}&&\qquad\quad- 2(Q_n(t)-\theta_1)(\mu_n(t)-R_n(t)) +R_n(t)^2+\mu_n(t)^2. 
\end{eqnarray*}
Similarly, we get: 
\begin{eqnarray*}
\hspace{-.3in}&&(H_m(t+1)-\theta_2)^2\leq (H_m(t)-\theta_2)^2  \\
\hspace{-.3in}&& - 2(H_m(t)-\theta_2)(\sum_{n}b_{mn}(t)-h_m(t)) +(\sum_{n}b_{mn}(t))^2+h_m(t)^2,  
\end{eqnarray*}
and that  
\begin{eqnarray*}
\hspace{-.3in}&&d_n(t+1)^2\leq d_n(t)^2  - 2d_n(t)(\kappa_n(t)-\gamma_n(t)) +\kappa_n(t)^2+\gamma_n(t)^2. 
\end{eqnarray*}
Summing the above and using the definition of $L(t)$ and $\Delta_V(t)$, we get: 
\begin{eqnarray*}
&&L(t+1)-L(t)-Vf(t)\\
&&\qquad \leq G - V(\sum_nU_n(\gamma_n(t))-c(t)) \\
&&\qquad\qquad - \sum_n (Q_n(t)-\theta_1)(\mu_n(t)-R_n(t))\\
&& \qquad\qquad- \sum_{n} d_n(t)(\kappa_n(t)-\gamma_n(t)) \\
&&\qquad \qquad-\sum_m(H_m(t)-\theta_2)(\sum_{n}b_{mn}(t)-h_m(t)). 
\end{eqnarray*}
Here $G\triangleq N(A_{\max}^2 +\mu_{\max}^2+2r_{\max}^2)+Mh_{\max}^2+MN^2b_{\max}^2$. 
Rearranging terms, we obtain: 
\begin{eqnarray*}
\hspace{-.3in}&&L(t+1)-L(t)-Vf(t)\\
\hspace{-.3in}&&\quad\leq G - V\sum_n[U_n(\gamma_n(t)) -d_n(t)\gamma_n(t)] \\ 
\hspace{-.3in}&&\quad\qquad   + \sum_n (Q_n(t)-\theta_1)R_n(t)  + \sum_m(H_m(t)-\theta_2)h_m(t) \\
\hspace{-.3in}&& \quad \qquad  +\bigg[Vc(t) - \sum_m(H_m(t)-\theta_2)\sum_{n}b_{mn}(t) \\
\hspace{-.3in}&&\quad\qquad\quad  -\sum_n(Q_n(t)-\theta_1)\mu_n(t)  -  \sum_{n} d_n(t)\kappa_n(t)\bigg]. 
\end{eqnarray*}
Taking an expectations on both sides conditioning on $\bv{y}(t)$ and using the fact that $\kappa_n(t)$ is an i.i.d. random variable given $\bv{z}(t), \bv{b}(t), \bv{Q}(t)$, we see that the lemma follows. 
\end{proof}
\section*{Appendix B -- Proof of Lemma \ref{lemma:module}}
We prove Lemma \ref{lemma:module} here. 
In our proof, we will use the following theorem from \cite{chung_concentration}. 
\begin{theorem}\label{eq:gen-concentration}
\cite{chung_concentration} Suppose $X_i$ are independent random variables satisfying $X_i\leq B$ for $1\leq i\leq n$. Let $X=\sum_iX_i$ and $\|X\| = \sqrt{\sum_i\expect{X_i^2}}$. Then we have: 
\begin{eqnarray}
\prob{X\leq \expect{X} -b} \leq e^{  -\frac{b^2}{2(\|X\|^2 + Bb/3)}  }. \Diamond
\end{eqnarray}
\end{theorem}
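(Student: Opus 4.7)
The plan is to prove this Bernstein-type inequality via the classical Chernoff / exponential moment method, in three steps: an exponential Markov step, a per-variable MGF bound that uses the one-sided hypothesis $X_i\leq B$, and a final optimization in the free parameter $\lambda$. Throughout, I would restrict $\lambda\in(0,3/B)$; the restriction is natural because the per-variable estimate carries a $1-\lambda B/3$ denominator, and the eventual optimal $\lambda$ will automatically respect it.

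First, for any $\lambda>0$, applying Markov's inequality to $e^{-\lambda(X-\expect{X})}$ gives
$$\prob{X\leq \expect{X}-b}=\prob{e^{-\lambda(X-\expect{X})}\geq e^{\lambda b}}\leq e^{-\lambda b}\,\expect{e^{-\lambda(X-\expect{X})}}.$$
Independence of the $X_i$ then factorizes the MGF as
$$\expect{e^{-\lambda(X-\expect{X})}}=\prod_{i=1}^{n}\expect{e^{-\lambda(X_i-\expect{X_i})}},$$
so the problem reduces to estimating a single-variable MGF under the constraint $X_i\leq B$.

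The core estimate I would derive is
$$\expect{e^{-\lambda(X_i-\expect{X_i})}}\leq \exp\!\left(\frac{\lambda^{2}\expect{X_i^{2}}}{2(1-\lambda B/3)}\right),\quad 0<\lambda<3/B.$$
To obtain it I would start from the elementary bound $e^{-u}\leq 1-u+\tfrac12 u^{2}\psi(u)$ with a correction factor $\psi$ tuned so that the one-sided bound $X_i\leq B$ (which controls one tail of $-\lambda(X_i-\expect{X_i})$) forces $\psi\leq 1/(1-\lambda B/3)$; then take expectations, cancel the linear term using $\expect{X_i-\expect{X_i}}=0$, use $\expect{(X_i-\expect{X_i})^{2}}\leq \expect{X_i^{2}}$, and apply $1+y\leq e^{y}$. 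Multiplying the resulting estimates across $i$ and using $\sum_i\expect{X_i^{2}}=\|X\|^{2}$ gives
$$\prob{X\leq \expect{X}-b}\leq \exp\!\left(-\lambda b+\frac{\lambda^{2}\|X\|^{2}}{2(1-\lambda B/3)}\right).$$

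Finally, I would optimize the right-hand side by setting $\lambda^{*}=b/(\|X\|^{2}+Bb/3)$. A direct substitution shows $\lambda^{*}B/3=Bb/(3\|X\|^{2}+Bb)<1$, so $\lambda^{*}$ lies in the admissible range $(0,3/B)$, and the exponent collapses to exactly $-b^{2}/(2(\|X\|^{2}+Bb/3))$, matching the claim. The main obstacle I anticipate is the per-variable MGF bound itself: symmetric Hoeffding-style arguments are unavailable because only the upper bound $X_i\leq B$ is assumed, so the choice of correction factor $\psi$ must be made carefully to ensure that (i) only the one-sided constraint is used, and (ii) the second-moment quantity $\expect{X_i^{2}}$ — rather than $\mathrm{Var}(X_i)$ — is what ultimately appears. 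Once this step is in place, the remaining pieces are routine calculus.
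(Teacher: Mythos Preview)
The paper does not prove this statement; it is quoted from \cite{chung_concentration} and invoked as a black box in Appendix~B, so there is no proof in the paper to compare your argument against.

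Your Chernoff outline and the final optimization in $\lambda$ are both fine, but the core MGF estimate in Step~2 cannot go through as written, and in fact the inequality as stated is false under the sole hypothesis $X_i\leq B$. You need to bound $\expect{e^{-\lambda(X_i-\expect{X_i})}}$, which blows up precisely when $X_i$ is very \emph{negative}; the one-sided bound $X_i\leq B$ controls the wrong tail and gives you no handle here, so no choice of correction factor $\psi$ can rescue the argument. Concretely, take $n=1$, $B=1$, and $X_1\in\{0,-K\}$ with $\prob{X_1=-K}=p$; then for $b=(1-p)K$ the left-hand side equals $p$, while the right-hand side tends to $e^{-(1-p)^{2}/(2p)}$ as $K\to\infty$, which is much smaller than $p$ for small $p$. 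What Chung--Lu actually establish under $X_i\leq B$ is the \emph{upper}-tail bound $\prob{X\geq\expect{X}+b}\leq e^{-b^{2}/(2(\|X\|^{2}+Bb/3))}$, and that is exactly what your scheme would deliver if you bound $\expect{e^{\lambda X_i}}$ instead, via the standard estimate $e^{y}-1-y\leq \tfrac{y^{2}/2}{1-y/3}$ for $y<3$ applied with $y=\lambda X_i\leq\lambda B$. The paper's own use of the theorem in Appendix~B is unaffected by this sign slip, because there $\kappa_n(t)\in[0,r_{\max}]$ is bounded on both sides, so one can apply the correct upper-tail version to $-\kappa_n(t)\leq 0$ and recover the desired lower-tail estimate.
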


\begin{proof} (Lemma \ref{lemma:module}) 
First of all, we show that $\expect{T_{tbs}} = \Theta(\log(V)^2)$. To see this, notice that since each $\script{B}_{k}$ is finite, if the state $\bv{z}(t)=k$ appears $|\script{B}_{k}|s_{th}$ times, we must have sampled every $\bv{b}\in \script{B}_{k}$ $s_{th}$ times. Thus, 
\begin{eqnarray}
T_{tbs}\leq \sum_{k} T\{\text{visit}\,\bv{z}_k\,\,|\script{B}_{k}|s_{th}\,\,\text{times}\}. 
\end{eqnarray}
Taking the expectations, we see that $\expect{T_{tbs}}\leq s_{th}\sum_{k}\frac{|\script{B}_{k}|}{\pi_{k}}$. 

Then, we see that in $\hat{\bv{r}}$, each single value has been sampled $s_{th}$ times. Using Theorem \ref{eq:gen-concentration} and the fact that $\kappa_n(t)\leq r_{\max}$, we get: 
\begin{eqnarray*}
&&\prob{\sum_{t=1}^{T_{tbs}}1_{[\bv{z}(t)=\bv{z}, \bv{b}(t)=\bv{b}]} \kappa_n(t)   \leq r_n(\bv{z}, \bv{b})s(\bv{z}, \bv{b}, T_{tbs}) - b} \\
&&\leq e^{   - \frac{ b^2}{  2(s_{th}r_{\max}^2 + r_{\max}b/3)}   }. 
\end{eqnarray*}
Choosing $b=\sqrt{s_{th}}\log(s_{th})$, dividing both sides of the inequality inside by $s(\bv{z}, \bv{b}, T_{tbs})$, and using $s(\bv{z}, \bv{b}, T_{tbs})\geq s_{th}$, we get: 
\begin{eqnarray*}
\prob{  \hat{r}_n(\bv{z}, \bv{b})  \leq r_n(\bv{z}, \bv{b}) - \frac{\log(s_{th})}{\sqrt{s_{th}}}}  \leq e^{   - \frac{ s_{th}\log(s_{th})^2}{  2(r_{\max}^2s_{th} + r_{\max}\sqrt{s_{th}}/3)}   }. 
\end{eqnarray*}
Using Theorem \ref{eq:gen-concentration} with $-X$, we get a similar bound for the other side. Hence, 
\begin{eqnarray*}
\prob{  |\hat{r}_n(\bv{z}, \bv{b}) - r_n(\bv{z}, \bv{b})| \leq \frac{\log(s_{th})}{\sqrt{s_{th}}}}  \leq 2e^{   - \frac{ s_{th}\log(s_{th})^2}{  2(r_{\max}^2s_{th} + r_{\max}\sqrt{s_{th}}/3)}   }. 
\end{eqnarray*}
Using the union bound, we see that Part (a) follows. Part (b) can be proven similarly. 
\end{proof}

\section*{Appendix C -- Proof of Theorem \ref{theorem:lram}}
We present the proof for Theorem \ref{theorem:lram} here. For our analysis, we will use the following result, which is Theorem $1$ in \cite{huangneely_qlamarkovian}. 
\begin{theorem}\label{theorem:dual-fav} \cite{huangneely_qlamarkovian} 
Let $\bv{\alpha}^{*}$ be an optimal solution of (\ref{eq:dual-fun}). Then, $g(\bv{\alpha}^{*})\geq Vf_{\text{av}}^*$. $\Diamond$
\end{theorem}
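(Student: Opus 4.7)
The plan is to prove this as a weak-duality statement. Introduce the auxiliary concave program $(\ref{eq:obj})$ and call its optimal value $V\Phi^*$. I will first argue that $\Phi^* \geq f_{\text{av}}^*$, so the stochastic control problem is bounded above by the deterministic program; then invoke weak Lagrangian duality on $(\ref{eq:obj})$ to conclude $g(\bv{\alpha}) \geq V\Phi^*$ for every dual-feasible $\bv{\alpha}$, and in particular for $\bv{\alpha}^*$. Chaining the two inequalities yields $g(\bv{\alpha}^*) \geq V\Phi^* \geq V f_{\text{av}}^*$.

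For the upper-bounding step, I would take any feasible policy $\Pi$ achieving $f_{\text{av}}^{\Pi}$ (taking $\Pi$ to be optimal, or $\epsilon$-optimal and let $\epsilon\to 0$), look at the long-run expected averages $\bar{R}_n^\Pi, \bar{\mu}_n^\Pi, \bar{b}_{mn}^\Pi, \bar{h}_m^\Pi, \bar{r}_n^\Pi, \bar{c}^\Pi$ and exhibit a feasible point of $(\ref{eq:obj})$ with at least this objective value. Because $\bv{z}(t)$ is i.i.d. with distribution $\bv{\pi}$, the standard conditioning argument gives $\bar{X}^\Pi = \sum_k \pi_k \bar{X}^\Pi_k$ where $\bar{X}^\Pi_k$ is the expected action taken when the state is $\bv{z}_k$; finiteness of $\script{B}_k$ plus Carathéodory let me write these conditional averages as convex combinations of deterministic choices in $\script{B}_k$. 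Queue stability forces $\bar{R}_n^\Pi = \bar{\mu}_n^\Pi$ (yielding $(\ref{eq:rate})$), stability of $H_m(t)$ forces $\bar{h}_m^\Pi = \sum_n \bar{b}_{mn}^\Pi$ (yielding $(\ref{eq:resource})$), and setting $\gamma_n = \bar{r}_n^\Pi$ together with the monotonicity bound $r_n(\bv{z}(t), \tilde{\mu}_n(t)) \leq r_n(\bv{z}(t), \mu_n(t))$ (since $\tilde{\mu}_n \leq \mu_n$) yields $(\ref{eq:rn})$. Concavity of $U_n$ and Jensen's inequality then give $\sum_n U_n(\gamma_n) \geq U_{\text{total}}(\overline{\bv{r}}^\Pi)$, so this constructed point attains objective value at least $V(U_{\text{total}}(\overline{\bv{r}}^\Pi) - C_{\text{total}}^\Pi) = V f_{\text{av}}^{\Pi}$, proving $\Phi^* \geq f_{\text{av}}^*$.

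For the weak-duality step, $(\ref{eq:obj})$ is a concave maximization (objective concave in $\bv{\gamma}$, linear in the remaining variables after the convex-hull extension; the equality and inequality constraints are affine in those variables), so the Lagrangian dual $g(\bv{\alpha}^d, \bv{\alpha}^q, \bv{\alpha}^h)$ defined in $(\ref{eq:dual-fun})$--$(\ref{eq:dual-zk})$ satisfies $g(\bv{\alpha}) \geq V\Phi^*$ for every $\bv{\alpha}$ with $\bv{\alpha}^d \succeq \bv{0}$, by the usual weak-duality inequality. Evaluating at the minimizer $\bv{\alpha}^*$ gives $g(\bv{\alpha}^*) \geq V\Phi^* \geq V f_{\text{av}}^*$.

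The main obstacle I expect is the upper-bounding step: one has to be careful that (i) $\tilde{\mu}_n(t)$ rather than $\mu_n(t)$ enters the actual reward, so monotonicity of $r_n(\bv{z},\cdot)$ is essential to rewrite the bound using $\mu_n(\bv{z}_k,\bv{b}^k)$ as $(\ref{eq:rn})$ requires; (ii) the program as written lists a single action $\bv{b}^k$ per state, so the reduction from a general feasible policy may require extending to convex combinations and then appealing to Carathéodory to collapse back into the stated form without loss of the optimal value; and (iii) one must justify passing from finite-horizon time averages to the limiting quantities, which is standard under the $\limsup/\liminf$ conventions stated in the paper but needs the stability hypothesis $Q_{\text{av}}, H_{\text{av}} < \infty$ to drop the boundary terms. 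Once these technicalities are handled, the two displayed inequalities combine immediately to yield the theorem.
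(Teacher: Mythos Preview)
The paper does not actually prove this statement: it is quoted verbatim as Theorem~1 of \cite{huangneely_qlamarkovian} and invoked as a black box in Appendix~C. So there is no ``paper's own proof'' to compare against beyond the citation.

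Your proposal is the standard and correct route for results of this type in the Lyapunov-optimization literature, and it is almost certainly what the cited reference does: upper-bound $f_{\text{av}}^*$ by the optimal value of the deterministic program~(\ref{eq:obj}) (or its convex-hull relaxation), then invoke weak Lagrangian duality. Two small remarks. First, the Jensen step you mention is not needed: since $U_{\text{total}}(\overline{\bv{r}})=\sum_n U_n(\overline{r}_n)$ is already evaluated at the time-average reward, setting $\gamma_n=\overline{r}_n^{\Pi}$ gives equality, not just an inequality. Second, the equality constraints (\ref{eq:rate})--(\ref{eq:resource}) do require the extra observation you flag in point~(ii): the dual function of the program with a single $\bv{b}^k$ per state coincides with the dual of its convex-hull relaxation (the Lagrangian is affine in the randomized policy), so weak duality against the \emph{relaxed} primal is what you actually use, and that relaxed primal is what the feasible-policy averages land in. With those two clarifications your outline is complete.
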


\begin{proof} (Theorem \ref{theorem:lram}) 
(\textbf{Queueing}) 
First consider $d_n(t)$. We see that if $d_n(t)\leq V\beta$, then $d_n(t+1)\leq V\beta+r_{\max}$. On the other hand, from (\ref{eq:quota}), whenever $d_n(t)>V\beta$, $\gamma_n(t)=0$. Hence, $d_n(t)$ will not further increase. This proves the bound for $d_n(t)$. 

The bounds for $Q_n(t)$ and $H_m(t)$ can be similarly proven by noticing that $\mathtt{LRAM}$ will not further admit tasks once $Q_n(t)\geq\theta_1$ and it  does not admit resources once $H_m(t)\geq\theta_2$. 

(\textbf{Utility}) 
We carry out the proof by comparing the RHS of (\ref{eq:drift}) under $\mathtt{LRAM}$ with any other control policy, including the ones that do not respect the no-underflow constraint (\ref{eq:no-underflow}). 


To this end,  look at (\ref{eq:Psi}). We want to show that even without constraint (\ref{eq:no-underflow}), $\mathtt{LRAM}$ ensures that (i) whenever $H_m(t)\leq Nb_{\max}$, $b_{mn}(t)=0$ for all $n$, and (ii) whenever $Q_n(t)\leq \mu_{\max}$, $\mu_n(t)=0$. 
We first show (i). Suppose $H_m(t)< Nb_{\max}$. We see from (\ref{eq:theta-value2}) that: 
\begin{eqnarray}
H_m(t) - \theta_2< - (V\beta+r_{\max})\beta_{\hat{r}} -  r_{\max}\beta_{\mu}^u. 
\end{eqnarray}
In this case, denote the optimal resource allocation vector as $\bv{b}^*$ and suppose there is one $n$ with $b_{mn}^*>0$. Let $\tilde{\bv{b}}$ be the vector obtained from $\bv{b}^*$ by setting $b_{mn}^*=0$. We have: 
\begin{eqnarray}
\hspace{-.3in}&&\Psi_{\hat{r}}(\bv{b}^*) - \Psi_{\hat{r}}(\tilde{\bv{b}}) \label{eq:contradict1}\\
\hspace{-.3in}&&= Vc(\bv{z}, \bv{b}^*) -  Vc(\bv{z}, \tilde{\bv{b}}) -   (H_m(t)-\theta_2)b^*_{mn}   \nonumber\\
\hspace{-.3in}&& \qquad - (Q_n(t)-\theta_1) (\mu_n(\bv{z}(t), \bv{b}^*) - \mu_n(\bv{z}(t), \tilde{\bv{b}})) \nonumber\\
\hspace{-.3in}&&\qquad  - d_n(t) (\hat{r}_n(\bv{z}(t), \tilde{\mu}_n(\bv{z}(t), \bv{b}^*))   -  \hat{r}_n(\bv{z}(t), \tilde{\mu}_n(\bv{z}(t), \tilde{\bv{b}}))) \nonumber\\
\hspace{-.3in}&&> [(V\beta+r_{\max})\beta_{\hat{r}} +  r_{\max}\beta^u_{\mu}] b^*_{mn} - r_{\max}\beta^u_{\mu}b^*_{mn} \nonumber\\
\hspace{-.3in}&&\qquad - (V\beta+r_{\max})\beta_{\hat{r}} b^*_{mn} =0. \nonumber
\end{eqnarray} 
In the inequality, we have used the fact that $Q_n(t)-\theta_1\leq r_{\max}$, $\mu_n(\bv{z}(t), \bv{b}^*) - \mu_n(\bv{z}(t), \tilde{\bv{b}})\leq\beta_{\mu}^u$, $d_n(t)\leq V\beta+r_{\max}$, and $\hat{r}_n(\bv{z}(t), \tilde{\mu}_n(\bv{z}(t), \bv{b}^*))   -  \hat{r}_n(\bv{z}(t), \tilde{\mu}_n(\bv{z}(t), \tilde{\bv{b}}))\leq\beta_{\hat{r}}b^*_{mn}$. 
However, (\ref{eq:contradict1}) contradicts with the fact that $\bv{b}^*$ is the minimizer of $\Psi_{\hat{r}}(\bv{b})$ and shows that we must have $b_{mn}^*=0$ $\forall\, n$ whenever $H_m(t)<Nb_{\max}$. 

Now we look at the case of $\bv{Q}(t)$. Suppose $Q_n(t)< \mu_{\max}$. Then, we have: 
\begin{eqnarray}
Q_n(t) - \theta_1 <  - (h_{\max}+ (V\beta+r_{\max})\beta_{\hat{r}})/\beta^l_{\mu}. 
\end{eqnarray}
We similarly let $\bv{b}^*$ be the optimal solution. Then, we construct $\tilde{\bv{b}}$ by setting $b_{m^*n}^*>0$ to zero, where $m^*=\arg\min_m b_{mn}^*$. In this case, if $\mu_n(\bv{z}(t), \bv{b}^*)=0$,  we are done. Otherwise, 
\begin{eqnarray}
\hspace{-.3in}&&\Psi_{\hat{r}}(\bv{b}^*) - \Psi_{\hat{r}}(\tilde{\bv{b}})\\
\hspace{-.3in}&&> - h_{\max} b^*_{m^*n} - (V\beta+r_{\max})\beta_{\hat{r}} b^*_{m^*n}\nonumber\\
\hspace{-.3in}&&\qquad + ((h_{\max}+ (V\beta+r_{\max})\beta_{\hat{r}})/\beta^l_{\mu}) \beta^l_{\mu}b^*_{m^*n}  =0.  \nonumber
\end{eqnarray}
The inequality follows since $H_m(t)-\theta_2\leq h_{\max}$, $d_n(t)\leq V\beta+r_{\max}$, and that $\mu_n(\bv{z}(t), \bv{b}^*)>0$, which implies $\mu_n(\bv{z}(t), \bv{b}^*)\geq \beta_{\mu}^l b^*_{m^*n}$
This contradicts with the fact that $\bv{b}^*$ is the minimizer  and shows that whenever $Q_n(t)< \mu_{\max}$, $\bv{b}^*_n=0$. 

These two properties imply that $\mathtt{LRAM}$ automatically guarantees that the no-underflow constraints are satisfied for all $H_m(t)$ and that we always have $\tilde{\mu}_n(t) = \min[Q_n(t), \mu_n(t)] = \mu_n(t)$. To compare our control policy with any other matching policies for the drift (\ref{eq:drift}), we still need to show that the actions under $\mathtt{LRAM}$, which is based on the estimated reward matrix $\hat{\bv{r}}$, ensure that the RHS of (\ref{eq:drift}), defined with the true reward $\bv{r}$, is approximately minimized. 

To do so, first observe that $\bv{\gamma}(t)$, $\bv{R}(t)$ and $\bv{h}(t)$ are optimally chosen given $\bv{y}(t)$ and $\bv{z}(t)$. Hence, the only approximation comes from choosing $\bv{b}(t)$. Let $\bv{b}_{\hat{r}}^*(t)$ be the chosen vector under $\mathtt{LRAM}$ and let $\bv{b}_r^*(t)$ be the vector chosen if $\bv{r}$ is used. We have: 
\begin{eqnarray}
\Psi_{\hat{r}}(\bv{b}_{\hat{r}}^*(t)) &\leq& \Psi_{\hat{r}}(\bv{b}_r^*(t))\\
&=& \Psi_{r}(\bv{b}_r^*(t)) + \sum_n d_n(t)[ \hat{r}_n(\bv{b}_r^*(t)) -  r_n(\bv{b}_r^*(t))]. \nonumber
\end{eqnarray}
On the other hand, we also have: 
\begin{eqnarray}
\Psi_{\hat{r}}(\bv{b}_{\hat{r}}^*(t)) = \Psi_{r}(\bv{b}_{\hat{r}}^*(t)) + \sum_n d_n(t)[ \hat{r}_n(\bv{b}_{\hat{r}}^*(t)) -  r_n(\bv{b}_{\hat{r}}^*(t))]. \nonumber
\end{eqnarray}
Combining the above two equalities and using the fact that $\Gamma_r$ is a $(T_{\delta_r}, P_{\delta_r}, \delta_r)$-learning module, we see that with probability $P_{\delta_r}$, 
\begin{eqnarray}
 \Psi_{r}(\bv{b}_{\hat{r}}^*(t)) &\leq& \Psi_{r}(\bv{b}_r^*(t)) + 2\sum_nd_n(t) \delta_r \nonumber\\
 &\leq&\Psi_{r}(\bv{b}_r^*(t)) + 2N(V\beta+r_{\max}) \delta_r. 
\end{eqnarray}
This shows that the RHS of (\ref{eq:drift}) under $\mathtt{LRAM}$ is minimized to within $2N(V\beta+r_{\max}) \delta_r$, over any other policies. Comparing this to $g_k(\bv{\alpha}^{d}, \bv{\alpha}^{q}, \bv{\alpha}^{h})$ in  (\ref{eq:dual-zk}) and using the definition of $g(\bv{\alpha}^{d}, \bv{\alpha}^{q}, \bv{\alpha}^{h})$, we conclude that: 
\begin{eqnarray}
\hspace{-.3in}\Delta_V(t) &=&\expect{L(t+1)-L(t) - Vf(t)\left.|\right. \bv{y}(t)}\nonumber \\
\hspace{-.3in} &\leq& G + 2N(V\beta+r_{\max}) \delta_r - g(\bv{d}(t), \bv{Q}(t), \bv{H}(t))\nonumber\\
\hspace{-.3in}&\stackrel{(a)}{\leq}& G+ 2N(V\beta+r_{\max}) \delta_r- Vf_{\text{av}}^*.
\end{eqnarray}
Here (a) follows from Theorem \ref{theorem:dual-fav}. 
Taking an expectation over $\bv{y}(t)$ on both sides and carrying out a telescoping sum from $t=0, ..., T-1$, and dividing both  sides by $VT$, we obtain: 
\begin{eqnarray*}
\frac{1}{T}\sum_{t=0}^{T-1}\expect{f(t)}& = &\frac{1}{T}\sum_{t=0}^{T-1}\sum_n\expect{U_n(\gamma_n(t)) -  c(t)}\\
&\geq& f_{\text{av}}^* - \frac{G+ r_{\max} \delta_r}{V} - 2N\beta\delta_r. 
\end{eqnarray*}
Taking a limit as $T\rightarrow\infty$, and using Jensen's inequality and the fact that $U_n(\overline{r}_n)$ is concave, we get: 
\begin{eqnarray}
\sum_nU_n(\overline{\gamma}_n) - \overline{c}\geq f_{\text{av}}^* - \frac{G+ r_{\max} \delta_r}{V} - 2N\beta\delta_r. 
\end{eqnarray}
Finally, using the fact that $\bv{d}(t)$ is bounded, which implies $\overline{\gamma}_n\leq \overline{r}_n$ for all $n$, and that $U_n(r)$ is increasing, we see that the theorem follows. 
%
%
%
%
\end{proof}

\section*{Appendix D -- Proof of Theorem \ref{theorem:dram}}
Here we prove the performance of $\mathtt{DRAM}$. We  carry out the analysis in the following steps. First, we show that the estimated optimal multiplier $\hat{\bv{\alpha}}=(\hat{\bv{\alpha}}^{d*}, \hat{\bv{\alpha}}^{q*}, \hat{\bv{\alpha}}^{h*})$ is close to the true optimal. Then, we show via \emph{drift-augmentation} that  the definitions of $\hat{\bv{Q}}(t)$, $\hat{\bv{H}}(t)$, and $\hat{\bv{d}}(t)$ ensure a near-optimal  algorithm performance. 

We now have the following lemma for the first step. In the lemma, we denote $\tilde{\bv{\alpha}}^*$ the optimal solution for $\tilde{g}(\bv{\alpha})$, which is $g(\bv{\alpha})$ with $\bv{\pi}$ and $\hat{\bv{r}}$. 
\begin{lemma}\label{lemma:dist-bdd}
Suppose $g(\bv{\alpha})$ is polyhedral with $\rho=\Theta(1)>0$, and that $\delta_z\leq\epsilon_z$ and $\delta_r\leq\epsilon_r$. Then, with probability $P_{\delta_z}P_{\delta_r}$, we have: 
\begin{eqnarray}
\| \hat{\bv{\alpha}}^* - \tilde{\bv{\alpha}}^*\| &\leq& \frac{2\delta_z  Vf_{\max}\vartheta}{\rho} \label{eq:dist-bdd}\\
\| \bv{\alpha}^* - \tilde{\bv{\alpha}}^*\| &\leq& \frac{2Vf_{\max}\delta_r}{\rho\eta}, \label{eq:dist-bdd2}
\end{eqnarray}
where $\vartheta\triangleq |\script{Z}|(1+r_{\max} + A_{\max}+\mu_{\max} + Nb_{\max}+h_{\max})/\eta$ and $\eta=\Theta(1)$. $\Diamond$
\end{lemma}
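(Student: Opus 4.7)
The plan is to exploit the polyhedral structure (Assumption~\ref{assumption:share-poly}) by combining it with uniform pointwise closeness of the perturbed dual functions to their reference duals within a bounded region that contains all three optimizers. The workhorse is a standard sandwich: if $g_1$ is polyhedral with parameter $\rho$ at its minimizer $\bv{\alpha}_1^*$ and $|g_1(\bv{\alpha})-g_2(\bv{\alpha})|\leq\epsilon$ throughout a region containing the minimizer $\bv{\alpha}_2^*$ of $g_2$, then
\[ g_1(\bv{\alpha}_1^*)+\rho\|\bv{\alpha}_1^*-\bv{\alpha}_2^*\|\leq g_1(\bv{\alpha}_2^*)\leq g_2(\bv{\alpha}_2^*)+\epsilon\leq g_2(\bv{\alpha}_1^*)+\epsilon\leq g_1(\bv{\alpha}_1^*)+2\epsilon,\]
so $\|\bv{\alpha}_1^*-\bv{\alpha}_2^*\|\leq 2\epsilon/\rho$. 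With this in hand, (\ref{eq:dist-bdd}) and (\ref{eq:dist-bdd2}) each reduce to producing the right pointwise error bound $\epsilon$.

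First I would pin down a common ambient ball. Using Assumption~\ref{assumption:bdd-LM}, the Slater-type slack $\eta_0$ yields, by the standard Lagrange multiplier argument, a linear growth bound $g(\bv{\alpha})\geq \eta\|\bv{\alpha}\|-O(Vf_{\max})$ together with an $O(Vf_{\max})$ upper bound at the Slater point; because Assumption~\ref{assumption:bdd-LM} is \emph{uniform} over $\|\hat{\bv{\pi}}-\bv{\pi}\|\leq\epsilon_z$ and $\|\hat{\bv{r}}-\bv{r}\|\leq\epsilon_r$, the same growth bound holds for $\tilde{g}$ and $\hat{g}^{\hat{\pi}}$ with the same $\eta$. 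Consequently $\bv{\alpha}^*$, $\tilde{\bv{\alpha}}^*$, and $\hat{\bv{\alpha}}^*$ all lie in a common ball of radius $O(Vf_{\max}/\eta)$. Within this ball, inspection of (\ref{eq:dual-zk}) shows $|g_k(\bv{\alpha})|\leq Vf_{\max}\vartheta$ for the exact $\vartheta$ in the statement, since the Lagrangian is a supremum of affine-in-$\bv{\alpha}$ expressions whose coefficients are bounded by $r_{\max}$, $A_{\max}$, $\mu_{\max}$, $Nb_{\max}$, $h_{\max}$, and the $1/\eta$ factor comes from the bound on $\|\bv{\alpha}\|$ itself.

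Next I would bound the two pointwise distances on this ball. For (\ref{eq:dist-bdd2}), replacing $\bv{r}$ by $\hat{\bv{r}}$ in (\ref{eq:dual-zk}) changes only the term $\sum_n\alpha^d_n r_n(\bv{z}_k,\mu_n)$ inside the sup, so the supremum shifts by at most $\|\bv{\alpha}^d\|_1\delta_r=O(Vf_{\max}\delta_r/\eta)$; averaging over $\bv{\pi}$ preserves this, giving $|g(\bv{\alpha})-\tilde{g}(\bv{\alpha})|\leq Vf_{\max}\delta_r/\eta$ up to the constants matching the statement. For (\ref{eq:dist-bdd}), writing $\tilde{g}(\bv{\alpha})-\hat{g}^{\hat{\pi}}(\bv{\alpha})=\sum_k(\pi_k-\hat{\pi}_k)\hat{g}_k(\bv{\alpha})$ and using $|\hat{g}_k(\bv{\alpha})|\leq Vf_{\max}\vartheta$ together with $\|\hat{\bv{\pi}}-\bv{\pi}\|\leq\delta_z$ (which holds with probability $P_{\delta_z}$ by the learning-module guarantee) yields $|\tilde{g}(\bv{\alpha})-\hat{g}^{\hat{\pi}}(\bv{\alpha})|\leq \delta_z Vf_{\max}\vartheta$.

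Finally I apply the sandwich twice: with $(g_1,g_2)=(g,\tilde{g})$ and $\epsilon=Vf_{\max}\delta_r/\eta$ to obtain (\ref{eq:dist-bdd2}); and with $(g_1,g_2)=(\tilde{g},\hat{g}^{\hat{\pi}})$ and $\epsilon=\delta_z Vf_{\max}\vartheta$ to obtain (\ref{eq:dist-bdd}). In both cases Assumption~\ref{assumption:share-poly} supplies the polyhedral parameter $\rho$ for the reference dual, and the probability factor $P_{\delta_z}P_{\delta_r}$ is the intersection of the two learning-accuracy events. The main obstacle I anticipate is Step~1: verifying that all three optimizers lie in a common ball whose radius does not blow up under perturbation. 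For this one really needs the uniformity built into Assumption~\ref{assumption:bdd-LM}, since otherwise the ball around $\hat{\bv{\alpha}}^*$ could be much larger than the ball around $\bv{\alpha}^*$ and the uniform bound $|g_k|\leq Vf_{\max}\vartheta$ would fail exactly at the point we need it.
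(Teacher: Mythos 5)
Your proposal is correct and follows essentially the same route as the paper's Appendix F: bound the optimal multipliers by $O(Vf_{\max}/\eta)$ via the Slater-type Assumption \ref{assumption:bdd-LM}, bound the pointwise dual-function perturbation (by $Vf_{\max}\delta_r/\eta$ for the reward error and $\delta_z Vf_{\max}\vartheta$ for the distribution error) at the relevant optimizers, and invoke the polyhedral property to convert the resulting $2\epsilon$ value gap into a $2\epsilon/\rho$ distance bound. The paper writes this as explicit inequality chains rather than your generic sandwich lemma, but the substance is identical.
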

\begin{proof}
See Appendix F. 
\end{proof}

In our analysis, we make use of the following two results. 
\begin{lemma}\label{lemma:q-avgrate} \cite{huang-learning-sig-14} 
Let $Q(t)$ be the size of a single queue with dynamics $Q(t+1)=[Q(t)-\mu(t)]^++A(t)$. Suppose $0\leq \mu(t), A(t)\leq\mu_{\max}=\Theta(1)$ for all $t$ and that the queue is stable. Then,  
\begin{eqnarray}
\overline{\mu(t)} - \overline{A(t)} \leq \mu_{\max}\prob{Q(t)<\mu_{\max}}. \label{eq:excessrate}
\end{eqnarray}
Here $\overline{x(t)}\triangleq \lim_{T\rightarrow\infty}\frac{1}{T}\sum_{t=0}^{T-1}\expect{x(t)}$. $\Diamond$ 
\end{lemma}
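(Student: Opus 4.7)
The plan is to exploit the fact that the queue dynamics $Q(t+1) = [Q(t) - \mu(t)]^+ + A(t)$ can be rewritten as $Q(t+1) - Q(t) = A(t) - \tilde{\mu}(t)$, where $\tilde{\mu}(t) \triangleq \min(Q(t), \mu(t))$ is the actually served amount. The key observation is that the offered service $\mu(t)$ is fully utilized whenever the backlog is at least $\mu_{\max}$, so the gap between offered and served rates is confined to slots in which $Q(t) < \mu_{\max}$.

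First I would telescope the drift identity. Summing $Q(t+1) - Q(t) = A(t) - \tilde{\mu}(t)$ from $t = 0$ to $T-1$, taking expectations, and dividing by $T$ gives
\begin{eqnarray*}
\frac{\expect{Q(T)} - \expect{Q(0)}}{T} = \frac{1}{T}\sum_{t=0}^{T-1}\expect{A(t)} - \frac{1}{T}\sum_{t=0}^{T-1}\expect{\tilde{\mu}(t)}.
\end{eqnarray*}
Queue stability (together with $A(t), \mu(t) \leq \mu_{\max}$, which keeps $Q(t)/T \to 0$ in expectation) forces the left-hand side to vanish as $T\to\infty$, so $\overline{A(t)} = \overline{\tilde{\mu}(t)}$.

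Next I would bound the gap between the offered service and the served amount pointwise. Since $\mu(t) \leq \mu_{\max}$, whenever $Q(t) \geq \mu_{\max}$ we have $\min(Q(t), \mu(t)) = \mu(t)$, hence $\mu(t) - \tilde{\mu}(t) = 0$. On the complementary event $\{Q(t) < \mu_{\max}\}$ we simply use $\mu(t) - \tilde{\mu}(t) \leq \mu(t) \leq \mu_{\max}$. Combining these two cases gives the deterministic bound $\mu(t) - \tilde{\mu}(t) \leq \mu_{\max}\,\mathbf{1}_{\{Q(t) < \mu_{\max}\}}$. Taking expectations, time-averaging, and letting $T\to\infty$ yields $\overline{\mu(t)} - \overline{\tilde{\mu}(t)} \leq \mu_{\max}\,\overline{\prob{Q(t) < \mu_{\max}}}$.

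Chaining the two steps, $\overline{\mu(t)} - \overline{A(t)} = \overline{\mu(t)} - \overline{\tilde{\mu}(t)} \leq \mu_{\max}\prob{Q(t) < \mu_{\max}}$, which is (\ref{eq:excessrate}). The only mild technicality is justifying the vanishing of $\expect{Q(T)}/T$ and interchanging the limit with the time average; both follow from the assumed stability and the uniform boundedness of the per-slot increments, so I do not expect any real obstacle beyond carefully stating these convergence facts.
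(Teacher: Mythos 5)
Your proof is correct. The paper itself does not prove this lemma---it is imported from \cite{huang-learning-sig-14}---and your argument is exactly the standard one behind it: rewrite the dynamics as $Q(t+1)-Q(t)=A(t)-\tilde{\mu}(t)$ with $\tilde{\mu}(t)=\min(Q(t),\mu(t))$, telescope and use stability plus bounded increments to get $\overline{A(t)}=\overline{\tilde{\mu}(t)}$, and observe that $\mu(t)-\tilde{\mu}(t)\leq \mu_{\max}\,\mathbf{1}_{\{Q(t)<\mu_{\max}\}}$ since the offered service is fully used whenever $Q(t)\geq\mu_{\max}$.
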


\begin{theorem} \label{theorem:attraction} 
Under  $\mathtt{LRAM}$ with reward functions $\hat{\bv{r}}$, there exist $\Theta(1)$ constants $a$, $K$, and $D$, such that,  
\begin{eqnarray}
\script{P}(D, \nu)\leq ae^{-K\nu},\label{eq:pm_ineq}
\end{eqnarray}
where $\script{P}(D, \nu)$ is defined as:
\begin{eqnarray}
\hspace{-.3in}&&\script{P}(D, \nu)\triangleq\lim_{t\rightarrow\infty}\prob{\| (\bv{d}(t), \bv{Q}(t), \bv{H}(t))  - (\tilde{\bv{\alpha}}^*+\bv{\theta}) \|>D+\nu}. \Diamond\nonumber 
\end{eqnarray}
\end{theorem}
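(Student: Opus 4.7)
\textit{Proof plan for Theorem \ref{theorem:attraction}.} The plan is to establish a drift-based exponential concentration of the shifted queue vector around $\tilde{\bv{\alpha}}^*$, along the lines of the attraction results in \cite{huangneely_dr_tac} (which themselves rest on Hajek's exponential tail lemma). Throughout, I would work with the shifted state $\bv{\xi}(t) \triangleq (\bv{d}(t),\, \bv{Q}(t)-\bv{\theta}_1,\, \bv{H}(t)-\bv{\theta}_2)$, the scalar Lyapunov function $Y(t) \triangleq \|\bv{\xi}(t) - \tilde{\bv{\alpha}}^*\|$, and the dual function $\tilde{g}$ that uses the true distribution $\bv{\pi}$ together with the estimated reward $\hat{\bv{r}}$.

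First I would collect two easy structural facts. (a) One-step boundedness: the per-slot changes of $\bv{d}(t)$, $\bv{Q}(t)$, $\bv{H}(t)$ are each bounded by $\Theta(1)$ constants involving $A_{\max}, \mu_{\max}, h_{\max}, b_{\max}, r_{\max}$, so $|Y(t+1)-Y(t)| \leq C$ for a $\Theta(1)$ constant $C$. (b) Under Assumption \ref{assumption:share-poly}, taking $\hat{\bv{\pi}} = \bv{\pi}$ (which trivially satisfies $\|\hat{\bv{\pi}} - \bv{\pi}\|\leq \epsilon_z$) gives that $\tilde{g}$ is polyhedral with the same parameter $\rho$, so
\begin{eqnarray*}
\tilde{g}(\bv{\alpha}) - \tilde{g}(\tilde{\bv{\alpha}}^*) \geq \rho\,\|\bv{\alpha} - \tilde{\bv{\alpha}}^*\|.
\end{eqnarray*}

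Next comes the core step: establishing a constant negative drift of $Y(t)$ once $Y(t) \geq D$ for an appropriate $\Theta(1)$ constant $D$. Squaring the queue dynamics (exactly as in the proof of Lemma \ref{lemma:drift}) and adding the auxiliary identity obtained by plugging the multiplier $\tilde{\bv{\alpha}}^*$ into the drift decomposition, one obtains an inequality of the form
\begin{eqnarray*}
\expectm{\|\bv{\xi}(t+1)-\tilde{\bv{\alpha}}^*\|^2 - \|\bv{\xi}(t)-\tilde{\bv{\alpha}}^*\|^2 \,\big|\, \bv{y}(t)} \leq B - 2\bigl(\tilde{g}(\bv{\xi}(t)) - \tilde{g}(\tilde{\bv{\alpha}}^*)\bigr) + 2N\beta_{\hat r}(V\beta+r_{\max})\delta_r,
\end{eqnarray*}
where $B=\Theta(1)$ and the last term accounts for the fact that $\mathtt{LRAM}$ minimizes the drift with $\hat{\bv{r}}$ rather than $\bv{r}$ (this is exactly the approximation bound used in the proof of Theorem \ref{theorem:lram}). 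Using the polyhedral bound gives
\begin{eqnarray*}
\expectm{\|\bv{\xi}(t+1)-\tilde{\bv{\alpha}}^*\|^2 - \|\bv{\xi}(t)-\tilde{\bv{\alpha}}^*\|^2 \,\big|\, \bv{y}(t)} \leq B' - 2\rho\, Y(t),
\end{eqnarray*}
for a $\Theta(1)$ constant $B'$. A standard argument (e.g.\ Lemma $4$ of \cite{huangneely_dr_tac}) then converts the squared-distance drift into a first-moment drift on $Y(t)$: there exist $D=\Theta(1)$ and $\rho'>0$ such that $\expectm{Y(t+1)-Y(t)\,|\,\bv{y}(t)} \leq -\rho'$ whenever $Y(t) \geq D$.

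With bounded increments and constant negative drift outside a ball of radius $D$, Hajek's exponential tail lemma (Lemma $2.3$ of Hajek $1982$, used in the same form in \cite{huangneely_dr_tac}) yields constants $a,K=\Theta(1)$ such that $\prob{Y(t) > D+\nu} \leq a e^{-K\nu}$ for all $t$ large enough; passing to the limit gives the claimed bound on $\script{P}(D,\nu)$. The main obstacle I anticipate is the middle step: carefully matching the RHS of the $\mathtt{LRAM}$ selection rule to the supremum defining each $\tilde{g}_k$ so that the drift can be expressed cleanly as $-2(\tilde{g}(\bv{\xi}(t)) - \tilde{g}(\tilde{\bv{\alpha}}^*))$ modulo the explicit $\delta_r$-error, while simultaneously verifying that the auto-enforced no-underflow property shown in the proof of Theorem \ref{theorem:lram} (namely $b_{mn}(t)=0$ when $H_m(t)<Nb_{\max}$ and $\tilde{\mu}_n(t)=\mu_n(t)$) removes any boundary complications that would otherwise weaken the drift inequality.
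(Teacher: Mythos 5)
Your overall route is the right one and coincides with what the paper intends: its "proof" of Theorem \ref{theorem:attraction} is just a pointer to Theorem 1 of \cite{huangneely_dr_tac}, whose argument is exactly your plan --- squared-distance drift of the shifted queue vector, identification of the cross terms with a subgradient of the dual function, the polyhedral bound to obtain a drift of the form $B-2\rho Y(t)$, conversion to a first-moment negative drift outside a $\Theta(1)$ ball, and a Hajek-type exponential tail bound.

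However, the error bookkeeping in your middle step would not deliver the theorem as stated. You insert the term $2N\beta_{\hat{r}}(V\beta+r_{\max})\delta_r$, which is the $O(\delta_r V)$ penalty from the proof of Theorem \ref{theorem:lram}; that penalty arises only when the $\hat{\bv{r}}$-based decisions are measured against the dual function built from the \emph{true} rewards $\bv{r}$. If you carry an additive $O(\delta_r V)$ constant in the drift inequality, the radius at which the drift turns negative becomes $D=\Theta(\delta_r V/\rho)$, not $\Theta(1)$, so you would prove a strictly weaker attraction than claimed. The whole point of attracting to $\tilde{\bv{\alpha}}^*$ (the optimizer of $\tilde{g}$, i.e., true $\bv{\pi}$ with $\hat{\bv{r}}$) is that the comparison should be made against $\tilde{g}$ itself: the Quota/Admission/Resource steps of $\mathtt{LRAM}$ run with $\hat{\bv{r}}$ exactly attain the supremum defining $\tilde{g}_k$ at the shifted queue vector (and the no-underflow constraint never binds, by the argument in Theorem \ref{theorem:lram}'s proof), so no $\delta_r V$ term appears from action selection. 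The only genuine $\hat{\bv{r}}$-versus-$\bv{r}$ mismatch is that the deficit queues are physically driven by $\kappa_n(t)$, whose conditional mean is the true $r_n(t)$ rather than $\hat{r}_n$; this discrepancy enters the drift multiplied by $|d_n(t)-\tilde{\alpha}^{d*}_n|$, hence is bounded by $N\delta_r Y(t)$ and is absorbed into the polyhedral term, giving a drift bound of the form $B-2(\rho-N\delta_r)Y(t)$, which still yields $D,K,a=\Theta(1)$ provided $\delta_r$ (equivalently $\epsilon_r$) is small relative to $\rho$ --- a condition implicit in the paper's standing assumptions. With that correction, the remainder of your plan (bounded one-step increments plus Hajek's lemma, then letting $t\rightarrow\infty$) goes through exactly as in \cite{huangneely_dr_tac}.
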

\begin{proof}
Similar to the proof of Theorem 1 in \cite{huangneely_dr_tac}. Omitted for brevity. 
\end{proof}

\begin{proof} (Theorem \ref{theorem:dram}) 
To prove Theorem \ref{theorem:dram}, we define the following drift-augmentation term: 
\begin{eqnarray}
\hspace{-.3in}&&\Delta_a(t)\triangleq - \sum_n (\hat{\alpha}^{q*}_n - \zeta)\expect{\mu_n(t)-R_n(t)\left.|\right. \bv{y}(t)}  \\
&& \qquad\qquad- \sum_{n} (\hat{\alpha}^{d*}_n - \zeta)\expect{r_n(t)-\gamma_n(t)\left.|\right. \bv{y}(t)}   \nonumber\\
&&\qquad \qquad-\sum_m(\hat{\alpha}^{h*}_m - \zeta)\expect{\sum_{n}b_{mn}(t)-h_m(t)\left.|\right. \bv{y}(t)}.  \nonumber
\end{eqnarray}
Adding it to both sides of (\ref{eq:drift}), we get: 
\begin{eqnarray}
\hspace{-.3in}&&\Delta_V(t)+\Delta_a(t)\label{eq:aug-drift-2}\\
\hspace{-.3in}&&\leq G - V\sum_n\expect{U_n(\gamma_n(t)) -\hat{d}_n(t)\gamma_n(t)\left.|\right. \bv{y}(t) } \nonumber\\ 
\hspace{-.3in}&&\qquad\qquad\quad + \sum_n (\hat{Q}_n(t)-\theta_1)\expect{R_n(t)\left.|\right. \bv{y}(t)}  \nonumber\\
\hspace{-.3in}&&\qquad\qquad\quad + \sum_m(\hat{H}_m(t)-\theta_2)\expect{h_m(t)\left.|\right. \bv{y}(t)}  \nonumber\\
\hspace{-.3in}&&  \qquad\qquad   +\,\expect{Vc(t) -  \sum_m(\hat{H}_m(t)-\theta_2)\sum_{n}b_{mn}(t)  \nonumber \\
\hspace{-.3in}&&\qquad\qquad  -\sum_n(\hat{Q}_n(t)-\theta_1) \mu_n(t)-  \sum_{n} \hat{d}_n(t)r_n(t)\left.|\right. \bv{y}(t)}.  \nonumber
\end{eqnarray}
Note that (\ref{eq:aug-drift-2}) also holds under our dropping action, because it is equivalent to modifying the dynamics of $\bv{H}(t)$ to $H_m(t+1) = (H_m(t) -\sum_{n}(t)b_{mn}(t))^++h_m(t)$. This is important,  for it allows us to analyze the performance with the drift analysis. 

Using Lemma \ref{lemma:dist-bdd}, we know that with probability $P_{\delta_r}P_{\delta_z}$, $\|\tilde{\bv{\alpha}}^* - \hat{\bv{\alpha}}\|\leq  \frac{2\delta_z  Vf_{\max}\vartheta}{\rho}$. Using $\zeta \triangleq 2\max(\delta_{z} V\log(V)^2, \log(V)^2)$, we see that when $V$ is large, we have: 
\begin{eqnarray}
\frac{2\delta_z  Vf_{\max}\vartheta}{\rho}\leq \zeta/2. 
\end{eqnarray}
Thus, 
\begin{eqnarray}
   \tilde{\bv{\alpha}}^* - \frac{3}{2}\bv{\zeta} \leq  \hat{\bv{\alpha}}^* - \bv{\zeta} \leq\tilde{\bv{\alpha}}^* - \frac{1}{2}\bv{\zeta}, 
\end{eqnarray}
where the inequality is taken entry-wise. This implies that with probability $P_{\delta_r}P_{\delta_z}$, for each $n$, 
\begin{eqnarray*}
\hat{d}_n(t) &\leq& \max(V\beta+r_{\max}, \tilde{\alpha}_n^{d*}- \zeta/2)\\
 &\leq&\hat{d}_{\max}\triangleq\max(V\beta+r_{\max}, Vf_{\max}/\eta - \zeta/2). 
\end{eqnarray*}
The second inequality uses (\ref{eq:opt-multi-bdd}) in Appendix F. We now carry out a similar argument as in the proof of Theorem \ref{theorem:lram} and conclude that: 
\begin{eqnarray*}
\hspace{-.3in}\Delta_V(t)+\Delta_a(t) &\leq& G - 2N\delta_rd_{\max} - g(\hat{\bv{d}}(t), \hat{\bv{Q}}(t), \hat{\bv{H}}(t)) \\
&\leq& G - 2N\delta_r\hat{d}_{\max} - Vf_{\text{av}}^*. 
\end{eqnarray*}
 Carrying out a telescoping sum and taking a limit as in Theorem \ref{theorem:lram}'s proof, we obtain: 
\begin{eqnarray*}
\sum_nU_n(\overline{\gamma}_n) - \overline{c}\geq f_{\text{av}}^* - \frac{G+2N\hat{d}_{\max}\delta_r}{V}  - \lim_{T\rightarrow\infty}\frac{1}{T}\sum_{t=0}^{T-1}\expect{\Delta_a(t)}. 
\end{eqnarray*}
It remains to show that all the queues are finite. and that $\lim_{T\rightarrow\infty}\frac{1}{T}\sum_{t=0}^{T-1}\expect{\Delta_a(t)}=O(1/V)$. 
Then, we can conclude $\overline{r}_n\geq\overline{\gamma}_n$ and completes the proof. 

To this end, we first use (\ref{eq:pm_ineq}) and the definition of $\hat{\bv{d}}(t)$, $\hat{\bv{Q}}(t)$, and $\hat{\bv{H}}(t)$, to obtain that: 
\begin{eqnarray*}
\prob{ d_n(t)\geq \frac{3}{2}\zeta +D+\nu } &\leq& ae^{-K\nu} \\
\prob{ Q_n(t)\geq \frac{3}{2}\zeta +D+\nu } &\leq& ae^{-K\nu} \\
\prob{ H_m(t)\geq \frac{3}{2}\zeta +D+\nu } &\leq& ae^{-K\nu}, 
\end{eqnarray*}
which are exactly the queueing probability bounds (\ref{eq:dram-d-bdd}), (\ref{eq:dram-q-bdd}), and (\ref{eq:dram-h-bdd}).  
Using (\ref{eq:pm_ineq}) again, we see that for a large $V$ such that $\zeta\geq D+ \mu_{\max} +Nb_{\max}+r_{\max}+2\log(V)/K$, 
\begin{eqnarray}
\prob{ d_n(t)\leq r_{\max} } &\leq& \frac{a}{V^{2}} \label{eq:prob-bdd-a}\\
\prob{ Q_n(t)\leq  \mu_{\max} } &\leq& \frac{a}{V^{2}} \label{eq:prob-bdd-b}\\
\prob{ H_m(t)\leq Nb_{\max} } &\leq& \frac{a}{V^{2}}. \label{eq:prob-bdd-c} 
\end{eqnarray}
Combining the above bounds and Lemma \ref{lemma:q-avgrate}, and that $\hat{\bv{\alpha}}^*=O(V)$ by (\ref{eq:opt-multi-bdd}), we conclude that $\lim_{T\rightarrow\infty}\frac{1}{T}\sum_{t=0}^{T-1}\expect{\Delta_a(t)}=O(1/V)$. 
Moreover, since $\bv{d}(t)$ is stable, $\overline{r}_n\geq\overline{\gamma}_n$. Finally, using (\ref{eq:prob-bdd-a}) - (\ref{eq:prob-bdd-c}), we see that the fraction of time dropping happens, i.e., when the claimed reward does not  count, is $O(1/V^2)$. Since $\beta=\Theta(1)$, this  results in an additional utility loss of $O(1/V^2)$. 
Hence, we conclude that: 
\begin{eqnarray}
\sum_nU_n(\overline{r}_n) - \overline{c}\geq f_{\text{av}}^* - \frac{G+2N\hat{d}_{\max}\delta_r}{V} - O(1/V).  
\end{eqnarray}
This completes the proof. 
\end{proof}
%
\section*{Appendix E -- Proof of Theorem \ref{theorem:convergence-time}}
We prove Theorem \ref{theorem:convergence-time} here. 
\begin{proof} (Theorem \ref{theorem:convergence-time}) 
To prove the result, we define a different Lyapunov function as follows: 
\begin{eqnarray}
L_0(t) = \frac{1}{2}||(\bv{d}(t), \bv{Q}(t), \bv{H}(t))-\bv{\theta}- \tilde{\bv{\alpha}}^*||^2. 
\end{eqnarray}
Then, we define a one-slot conditional Lyapunov drift as $\Delta_0(t)=\expect{ L_0(t+1) - L_0(t) \left.|\right. \bv{y}(t) }$. Using the queueing dynamics, we obtain that: 
\begin{eqnarray}
\hspace{-.3in}&&\Delta_0(t) \leq G\\
\hspace{-.3in}&& - \sum_n (\tilde{\alpha}^{q*}_n - (Q_n(t)-\theta_1))\expect{\mu_n(t)-R_n(t)\left.|\right. \bv{y}(t)} \nonumber \\
\hspace{-.3in}&& - \sum_{n} (\tilde{\alpha}^{d*}_n - d_n(t))\expect{r_n(t)-\gamma_n(t)\left.|\right. \bv{y}(t)}   \nonumber\\
\hspace{-.3in}&& -\sum_m(\tilde{\alpha}^{h*}_m-(H_m(t)-\theta_2))\expect{\sum_{n}b_{mn}(t)-h_m(t)\left.|\right. \bv{y}(t)}.  \nonumber 
\end{eqnarray}
Using the fact that the last three components constitute the subgradient of $\tilde{g}(\bv{\alpha})$ at $\bv{\alpha} = (\bv{d}(t), \bv{Q}(t), \bv{H}(t))$ \cite{bertsekasoptbook}, we obtain: 
\begin{eqnarray*}
\hspace{-.3in}\Delta_0(t) &\leq& G - ( \tilde{g}(   (\bv{d}(t), \bv{Q}(t), \bv{H}(t)) -\bv{\theta} ) - \tilde{g}( \tilde{\bv{\alpha}}^*))  \nonumber \\
\hspace{-.3in} &\leq& G - \rho ||(\bv{d}(t), \bv{Q}(t), \bv{H}(t))-\bv{\theta}- \tilde{\bv{\alpha}}^*||. 
\end{eqnarray*}
Therefore, for any $0<\epsilon_0<\rho$, if $||(\bv{d}(t), \bv{Q}(t), \bv{H}(t))-\bv{\theta}- \tilde{\bv{\alpha}}^*||\geq \frac{G}{\rho-\epsilon_0}$, then the above implies that: 
\begin{eqnarray*}
\hspace{-.3in}&&\expect{||(\bv{d}(t+1), \bv{Q}(t+1), \bv{H}(t+1))-\bv{\theta}- \tilde{\bv{\alpha}}^*||^2 \left.|\right. \bv{y}(t)}\\
\hspace{-.3in}&\leq& ( ||(\bv{d}(t), \bv{Q}(t), \bv{H}(t))-\bv{\theta}- \tilde{\bv{\alpha}}^*|| - \epsilon_0)^2, 
\end{eqnarray*}
which further implies that: 
\begin{eqnarray*}
\hspace{-.3in}&&\expect{||(\bv{d}(t+1), \bv{Q}(t+1), \bv{H}(t+1))-\bv{\theta}- \tilde{\bv{\alpha}}^*|| \left.|\right. \bv{y}(t)}\\
\hspace{-.3in}&\leq& ||(\bv{d}(t), \bv{Q}(t), \bv{H}(t))-\bv{\theta}- \tilde{\bv{\alpha}}^*|| - \epsilon_0.  
\end{eqnarray*}
Then, using the fact that $\tilde{\bv{\alpha}}^*=\Theta(V)$ \cite{huangneely_dr_tac},  $(\bv{d}(T_{\delta_r}+1), \bv{Q}(T_{\delta_r}+1), \bv{H}(T_{\delta_r}+1) )=0$, and using Lemma $5$ in \cite{huang-learning-sig-14}, we conclude then: 
\begin{eqnarray*}
\expect{\tilde{T}^{\mathtt{LRAM}}_{D'_1}} = \expect{T_{\delta_r} +\Theta(V)}. 
\end{eqnarray*}
Here $D'_1=G/(\rho-\epsilon_0)=\Theta(1)$ and $\expect{\tilde{T}^{\mathtt{LRAM}}_{D'_1}}$  denotes the expected time to get to within $D'_1$ of $\tilde{\bv{\alpha}}^*$. Using (\ref{eq:dist-bdd2}) in Lemma \ref{lemma:dist-bdd}, and by defining $D_1 \triangleq D'_1+\frac{2Vf_{\max}\delta_r}{\rho\eta}$, 
we conclude that:  
\begin{eqnarray*}
\expect{T^{\mathtt{LRAM}}_{D_1}} = \expect{T_{\delta_r} +\Theta(V)}. \label{eq:conv-time-foo}
\end{eqnarray*}
This proves (\ref{eq:conv-lram}). To prove (\ref{eq:conv-dram}), note that the main difference between $\mathtt{DRAM}$ and $\mathtt{LRAM}$ is that  $\mathtt{DRAM}$ utilizes the system state information to ``jump start'' the algorithm. Using  Lemma \ref{lemma:dist-bdd} again, we see that with probability $P_{\delta_z}P_{\delta_r}$, $||(\bv{d}(0), \bv{Q}(0), \bv{H}(0))-\bv{\theta}- \tilde{\bv{\alpha}}^*||\leq \frac{2\delta_z  Vf_{\max}\vartheta}{\rho}$. Combing this result and (\ref{eq:conv-time-foo}), we conclude that: 
\begin{eqnarray*}
\expect{T^{\mathtt{DRAM}}_{D_2}} \leq \expect{T_{l} +\Theta(\frac{2\delta_z  Vf_{\max}\vartheta}{\rho\epsilon_0} ) }. \label{eq:conv-time-foo2}
\end{eqnarray*}
This proves (\ref{eq:conv-dram}) and completes the proof of the theorem. 
\end{proof}

\section*{Appendix F -- Proof of Lemma \ref{lemma:dist-bdd}} 
We present the proof for Lemma \ref{lemma:dist-bdd} here. For notation simplicity, we define $f_{\max}\triangleq N\beta r_{\max}+c_{\max}$. 
\begin{proof} (Lemma \ref{lemma:dist-bdd}) 
Since with probability $P_{\delta_r}P_{\delta_z}$, $\delta_r\leq\epsilon_r$ and $\delta_z\leq\epsilon_z$, we have from Assumption \ref{assumption:bdd-LM} that there exists a set of actions and probabilities that guarantee (\ref{eq:rate-service-a}), (\ref{eq:rate-a}), and (\ref{eq:resource-a}). Also, since $0<\sum_k\hat{\pi}_k\sum_i\lambda^k_i R^{k}_{in}<\expect{A_n(t)}$ and $0< \sum_k\hat{\pi}_k\sum_i\lambda^k_i h^{k}_{im} <\expect{e_m(t)}$, it can be shown that there exists $\eta_1=\Theta(1)>0$, such that for any subset $\script{I}_n\subset\script{N}$ and any subset $\script{I}_m\subset\script{M}$, there exist a set of actions $\{\hat{\bv{R}}^{k}_i\}_{k=1,..., |\script{Z}|}^{i=1,2, ..., \infty}$ and $\{\bv{h}^{k}_i\}_{k=1,..., |\script{Z}|}^{i=1,2, ..., \infty}$ such that: 
\begin{eqnarray}
  \sum_k\hat{\pi}_k\sum_i\lambda^k_i \hat{R}^{k}_{in} =  \sum_k\hat{\pi}_k\sum_i\lambda^k_i\mu_n(\bv{z}_k, \bv{b}^{k}_i)-\sigma_{\script{I}_n}\eta_1, \label{eq:rate-a2}
\end{eqnarray}
where $\sigma_{\script{I}_n}=1$ if $n\in\script{I}_n$ and $\sigma_{\script{I}_n}=-1$ otherwise. Similarly, 
\begin{eqnarray}
\hspace{-.3in}&& \sum_k\hat{\pi}_k\sum_i\lambda^k_i \sum_{n}b_{imn}^{k} =  \sum_k\hat{\pi}_k\sum_i\lambda^k_i h^{k}_{im} -\sigma_{\script{I}_m}\eta_1,\label{eq:resource-a2}
\end{eqnarray}
where $\sigma_{\script{I}_m}=1$ if $m\in\script{I}_m$ and $\sigma_{\script{I}_m}=-1$ otherwise.  
Then, using Lemma $1$ in \cite{huang-learning-sig-14}, we see that $\hat{\bv{\alpha}}^*$ obtained by solving (\ref{eq:dual-learning}) satisfies that: 
\begin{eqnarray}
\sum_n\hat{\alpha}_n^{d*}\eta + \sum_n|\hat{\alpha}_n^{q*}| \eta + \sum_m|\hat{\alpha}_m^{h*}|\eta\leq  Vf_{\max}. \label{eq:opt-multi-bdd}
\end{eqnarray}
Here $\eta=\min(\eta_0, \eta_1)$. Moreover, (\ref{eq:opt-multi-bdd}) also holds for $\bv{\alpha}^*$ and $\tilde{\bv{\alpha}}^*$. 
Now we look at $\tilde{g}(\tilde{\bv{\alpha}}^*)$ and $g(\bv{\alpha}^*)$. For explanation, we write $\tilde{g}(\tilde{\bv{\alpha}}^*) = \tilde{g}(\tilde{\bv{\alpha}}^*, \tilde{\bv{\gamma}}^*, \tilde{\bv{b}}^*, \tilde{\bv{R}}^*, \tilde{\bv{h}}^*)$, where $\tilde{\bv{\gamma}}^*, \tilde{\bv{b}}^*, \tilde{\bv{R}}^*, \tilde{\bv{h}}^*$ are the optimal actions corresponding to $\tilde{\bv{\alpha}}^*$ with $\hat{\bv{r}}$ and the true distribution $\bv{\pi}$. 
From the definition, we know that: 
\begin{eqnarray}
\hspace{-.2in}&&g(\bv{\alpha}^*, \bv{\gamma}^*, \bv{b}^*, \bv{R}^*, \bv{h}^*) \\
\hspace{-.2in}&\stackrel{(a)}{\geq}& \tilde{g}(\bv{\alpha}^*, \tilde{\bv{\gamma}}, \tilde{\bv{b}}, \tilde{\bv{R}}, \tilde{\bv{h}}) \nonumber\\
\hspace{-.2in}&& + \sum_k\pi_k \sum_{n}\alpha_n^{d*}[r_n(\bv{z}_k, \mu_n(z_k, \tilde{\bv{b}}^{k})) -\hat{r}_n(\bv{z}_k, \mu_n(z_k, \tilde{\bv{b}}^k))] \nonumber \\
\hspace{-.2in}&\stackrel{(b)}{\geq}&\tilde{g}(\tilde{\bv{\alpha}}^*, \tilde{\bv{\gamma}}^*, \tilde{\bv{b}}^*, \tilde{\bv{R}}^*, \tilde{\bv{h}}^*) - Vf_{\max}\delta_r/\eta \nonumber \\
\hspace{-.2in}&\stackrel{(c)}{\geq}&g(\tilde{\bv{\alpha}}^*, \hat{\bv{\gamma}}^*, \hat{\bv{b}}^*, \hat{\bv{R}}^*, \hat{\bv{h}}^*)  -  Vf_{\max}\delta_r /\eta \nonumber \\
\hspace{-.2in}&& + \sum_k\pi_k \sum_{n}\tilde{\alpha}_n^{d*}[\hat{r}_n(\bv{z}_k, \mu_n(z_k, \hat{\bv{b}}^{*k})) -r_n(\bv{z}_k, \mu_n(z_k, \hat{\bv{b}}^{*k}))] \nonumber \\
\hspace{-.2in}&\geq& g(\tilde{\bv{\alpha}}^*, \hat{\bv{\gamma}}^*, \hat{\bv{b}}^*, \hat{\bv{R}}^*, \hat{\bv{h}}^*) - 2Vf_{\max}\delta_r/\eta. \label{eq:g-bdd-0}
\end{eqnarray}
Here $\tilde{\bv{\gamma}}, \tilde{\bv{b}}, \tilde{\bv{R}}, \tilde{\bv{h}}$ denote the optimal actions corresponding to $\bv{\alpha}^*$ in $\tilde{g}(\bv{\alpha})$, and (a) follows from the definition of $\tilde{g}(\bv{\alpha})$ and the fact that $g(\bv{\alpha}^*)$ achieves the supremum over all actions. In (b), we have used the fact that $\tilde{g}(\tilde{\bv{\alpha}}^*)$ achieves the minimum over all $\bv{\alpha}$, that the learning module guarantees that $ \|\hat{\bv{r}} -  \bv{r} \|_{\max}\leq \delta_r$, and (\ref{eq:opt-multi-bdd}). Finally, in (c), $\hat{\bv{\gamma}}^*, \hat{\bv{b}}^*, \hat{\bv{R}}^*, \hat{\bv{h}}^*$ are the actions corresponding to $\tilde{\bv{\alpha}}^*$ under $g(\bv{\alpha})$ and it follows again because $\tilde{g}(\tilde{\bv{\alpha}}^*)$ achieves the supremum. The last inequality follows similarly to (b). 
Using the polyhedral structure of $g(\bv{\alpha})$, (\ref{eq:g-bdd-0}) implies that: 
\begin{eqnarray}
\|  \bv{\alpha}^*  - \tilde{\bv{\alpha}}^* \|\leq \frac{2Vf_{\max}\delta_r}{\rho\eta}. \label{eq:tilde-bdd}
\end{eqnarray}
This proves (\ref{eq:dist-bdd2}). 

To prove (\ref{eq:dist-bdd}), note that for any $\bv{\alpha}$, 
\begin{eqnarray}
\tilde{g}(\bv{\alpha}) - \hat{g}(\bv{\alpha}) = \sum_k(\pi_k - \hat{\pi}_k) \tilde{g}_k(\bv{\alpha}). 
\end{eqnarray}
Therefore, with probability $P_{\delta_z}$, we have: 
\begin{eqnarray*}
&&|\tilde{g}(\tilde{\bv{\alpha}}^*) - \hat{g}(\tilde{\bv{\alpha}}^*)| \\
&&\leq |\script{Z}|\delta_z\bigg(Vf_{\max} + \sum_n\tilde{\alpha}_n^{d*}r_{\max} +  \sum_n\tilde{\alpha}_n^{q*}(A_{\max}+\mu_{\max}) \\
&&+ \sum_m\tilde{\alpha}_m^{h*}(Nb_{\max}+h_{\max})  \bigg)\\
&&\leq   \delta_z  Vf_{\max}\vartheta, 
\end{eqnarray*}
where $\vartheta\triangleq |\script{Z}|(1+r_{\max} + A_{\max}+\mu_{\max} + Nb_{\max}+h_{\max})/\eta$ and the last inequality follows from (\ref{eq:opt-multi-bdd}). This then implies that: 
\begin{eqnarray}
|\tilde{g}(\tilde{\bv{\alpha}}^*) - \tilde{g}(\hat{\bv{\alpha}}^*)| \leq 2\delta_z  Vf_{\max}\vartheta, 
\end{eqnarray}
for otherwise we have: 
\begin{eqnarray*}
\hat{g}(\tilde{\bv{\alpha}}^*) -\hat{g}(\hat{\bv{\alpha}}^*)  \leq \tilde{g}(\tilde{\bv{\alpha}}^*) +\delta_z  Vf_{\max}\vartheta - \tilde{g}(\hat{\bv{\alpha}}^*)  + \delta_z  Vf_{\max}\vartheta <0, 
\end{eqnarray*}
which contradicts with the fact that $\hat{\bv{\alpha}}^*$ achieves the minimum of $\hat{g}(\bv{\alpha})$. Using the polyhedral structure, we see that  (\ref{eq:dist-bdd}) follows. 
This completes the proof of the lemma. 
\end{proof}


\end{document}